\newtheoremstyle{break}
  {\topsep}{\topsep}%
  {\itshape}{}%
  {\bfseries}{}%
  {\newline}{}%
\newtheoremstyle{break1}
  {\topsep}{\topsep}%
  {}{}%
  {\bfseries}{}%
  {\newline}{}%
\theoremstyle{break} \newtheorem{theorem}{Theorem}[section]
\theoremstyle{break} 
\theoremstyle{break1} \newtheorem{remark}[theorem]{Remark}
\theoremstyle{break} 
\theoremstyle{break1} \newtheorem{definition}[theorem]{Definition} 
\theoremstyle{break} \newtheorem{lemma}[theorem]{Lemma}
\theoremstyle{break} \newtheorem{corollary}[theorem]{Corollary}
\theoremstyle{break} 
\theoremstyle{break} 
\theoremstyle{break} 
\theoremstyle{break} 
\theoremstyle{break} \newtheorem{proposition}[theorem]{Proposition}
\theoremstyle{break} 
\numberwithin{equation}{section}
\newcommand{\hide}[1]{}
\newcommand{\R}{{\mathbb{R}}}
\newcommand{\C}{{\mathbb{C}}}
\newcommand{\B}{{\mathbb{B}}}
\def\Re{\mathop{{\rm Re}}}
\def\HB{\mathop{{\rm Hol}}(\B^n,\C^n)}
\def\M{\mathcal{M}_n}
\def\id{\mathop{{\rm id}}}
\def\supp{\mathop{{\rm supp}}}
\def\ext{\mathop{{\rm ext}}}
\newcommand{\eps}{\varepsilon}
\begin{document}

\vspace*{-1cm}

\renewcommand{\thefootnote}{}
\begin{center}
{\Large \bf Pontryagin's maximum principle for\\[2mm]
 the Loewner equation  in higher dimensions}\\[5mm]
{\large Oliver Roth}\\
\today\\[2mm]
\end{center}

\footnotetext{Mathematics Subject Classification (2000) \quad Primary 32H02
  $\cdot$ 30C55 $\cdot$ 49K15}
\begin{abstract}
\noindent In this paper we develop a variational method for the Loewner
equation in higher dimensions. As a result we obtain a version of Pontryagin's
maximum principle from optimal control theory for the Loewner equation in
several complex variables. Based on recent work of Arosio, Bracci and Wold
\cite{ABFW}, 
we then apply  our version of the Pontryagin maximum
principle to obtain first--order necessary conditions for the extremal
functions for a wide class
of extremal problems over the set of  normalized biholomorphic mappings on the unit ball in $\C^n$.
\end{abstract}

\section{Introduction}
Let $\B^n:=\{z \in \C^n \, : \, ||z||<1\}$ denote the unit ball of $\C^n$ 
with respect to the euclidean norm $||\cdot||$ and let
$\HB$ be
 the vector space of all holomorphic maps from $\B^n$ into $\C^n$. The set
 $$\mathcal{S}_n:=\{f \in \HB \, : \, f(0)=0, df_0=\id, f \text{ univalent}\}$$
of normalized biholomorphic mappings on $\B^n$ has been 
introduced by H.~Cartan \cite{Car33}.
 One of the main problems when dealing with univalent functions in the class
 $\mathcal{S}_n$ in dimensions $n>1$ is the fact that there is no Riemann mapping
 theorem available. 
In particular, this makes it fairly difficult to construct variations of a
given map in the class $\mathcal{S}_n$. 

\medskip

The aim of the present paper is to develop a variational
method which works effectively for univalent functions that can be obtained as
solutions of Loewner--type differential equations.
We present the details only for the class $\mathcal{S}_n^0 \subset \mathcal{S}_n$ of
all functions which admit a so--called parametric representation by means of
the Loewner  equation.
This class has been introduced by I.~Graham, G.~Kohr et al.~(see
e.g.~\cite{GK,GHKK07}) and is obtained in a most natural way by generalizing the
  classical one--dimenional Loewner equation \cite{Loe23} to higher dimensions.
We note that the approach of the present paper can also be used for other
more general Loewner--type equations, e.g.~for the class of functions that
have a so--called $A$--parametric representation (see
\cite{GHKK07,DGHK,GHKK13}), and  also 
 for the various Loewner equations in the unit disk and complete
hyperbolic manifolds, which have recently been studied intensively (see \cite{ABCD, A, AB, ABHK, ABFW, BCD09, BCD12}).

\medskip

We now give a short account of the results of this paper and start by
introducing some notation.

\begin{definition}
Let $$\M:=\left\{ h \in \HB \, : \, h(0)=0, dh_0=-\id, \, \Re \langle
  h(z),z\rangle \le 0 \text{ for all } z \in \B^n \right\} \, .$$
Here, $\langle \,\cdot,\cdot \rangle$ denotes the standard Euclidean inner product of $\C^n$.
 A \textit{Herglotz vector field in the class $\M$} is a mapping $G : \B^n
 \times \R^+ \to \C^n$ such that
\begin{itemize}
\item[(i)] $G(z,\cdot)$ is measurable on $\R^+$ for every $z \in \B^n$, and
\item[(ii)] $G(\cdot,t) \in \M$ for a.e.~$t \in \R^+$.
\end{itemize}
\end{definition}

It is not difficult to show that
a function $h \in \HB$ satisfying $h(0)=0$ and $dh_0=-\id$ belongs to $\M$ 
if and only if $\Re \langle -h(z),z \rangle>0$ for all $z \in \B^n \backslash \{0\}$,
see \cite[Remark 2.1]{BHKG}. In particular, 
the set $\M$ is exactly the class $-\mathcal{M}$ as defined e.g.~in
\cite[p.~203]{GK}. Hence, $\M$ is a compact subset of $\HB$ (see \cite[Theorem
6.1.39]{GK}). This fact will play an important r$\hat{\text{o}}$le in this paper.

\begin{definition}[The Loewner Equation on the unit ball $\B^n$]
Let $G(z,t)$ be a Herglotz vector field in the class $\M$.
We denote by $\varphi^G_{t}$  the unique
solution $\varphi_{t}$ of the Loewner ODE
\begin{equation} \label{eq:L} \begin{array}{rcl}
\dot{\varphi}_{t} (z)&=& G(\varphi_{t}(z),t) \quad \text{ for a.e. } t \ge 0 \,
, \\[2mm]
\varphi_0(z)&=&z \in \B^n\, .
\end{array}
\end{equation}
\end{definition}

For any Herglotz vector field $G(z,t)$ in the class $\M$, the limit
$$f^G:=\lim_{t \to \infty} e^t \varphi^G_t$$ exists locally uniformly in
$\B^n$ and belongs to $\mathcal{S}_n$,
see \cite[Thm.~8.1.5]{GK}.  We can therefore define
$$ \mathcal{S}_n^0:=\left\{f \, | \, f=f^G \text{ for some Herglotz vector
  field } G \text{ in the class } \M \right\} \, .$$
It is known that the class $\mathcal{S}_n^0$ is compact and that 
 $e^t\varphi_t^G \in \mathcal{S}_n^0$ for all
$t \in \R^+_0$ and every Herglotz vector field $G$ in the class $\M$, see \cite{GK}.
Hence one may think of $\mathcal{S}_n^0$ as the ``reachable set'' of the Loewner
equation (\ref{eq:L}).

\begin{theorem}[A variational formula in $\mathcal{S}_n^0$] \label{thm:main1}
Let $f \in \mathcal{S}_n^0$. Suppose that $G(z,t)$ is a Herglotz vector field in the
class $\M$  such that $f=f^G$. Then  for almost every $t\ge 0$
and any $h \in \M$
there exists a family of functions $f^{\eps} \in \mathcal{S}_n^0$ such that
$$ f^{\eps}(z)=f(z)+\eps \, d(f)_z \cdot \left[ d(\varphi^G_t)_z \right]^{-1}
\left[ h\left(\varphi^G_t(z)\right)-G\left(\varphi^G_t(z),t\right)
\right]+r^{\eps}(z) \, .$$
Here, the error term $r^{\eps} \in \HB$ has the property that $r^{\eps}/\eps
\to 0$ locally uniformly in $\B^n$ as $\eps \to0+$.
\end{theorem}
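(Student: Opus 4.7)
The plan is to employ a \emph{needle variation} (in the spirit of Pontryagin) of the driving Herglotz vector field $G$. Fix a time $t\ge 0$ which is simultaneously a Lebesgue point of $s\mapsto G(\cdot,s)\in\M$ (in a suitable topology, e.g.\ that of uniform convergence on compacta) and, for the given $z$, of $s\mapsto G(\varphi_s^G(z),s)$; almost every $t$ has this property. For $\eps>0$ small, define
\begin{equation*}
G^{\eps}(z,s):=\begin{cases} h(z) & \text{if } s\in[t,t+\eps],\\ G(z,s) & \text{otherwise,}\end{cases}
\end{equation*}
which is again a Herglotz vector field in the class $\M$ since $h\in\M$. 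Consequently $f^{\eps}:=f^{G^{\eps}}\in\mathcal{S}_n^0$, and it remains to expand $f^{\eps}$ to first order in $\eps$.

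First I would compare the flows $\varphi_s^{\eps}:=\varphi_s^{G^{\eps}}$ and $\varphi_s^G$. They coincide for $s\le t$. On the interval $[t,t+\eps]$ the perturbed flow is driven by $h$ while the unperturbed one is driven by $G(\cdot,s)$. Exploiting compactness of $\M$ in $\HB$ (which yields uniform local Lipschitz bounds) together with the Lebesgue point property of $t$, a direct integration shows
\begin{equation*}
\varphi_{t+\eps}^{\eps}(z)-\varphi_{t+\eps}^G(z) \;=\; \eps\,\bigl[h(\varphi_t^G(z))-G(\varphi_t^G(z),t)\bigr]+o(\eps),
\end{equation*}
locally uniformly in $z\in\B^n$.

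Next I would propagate this infinitesimal perturbation forward in time. For $s\ge t+\eps$ both $\varphi_s^{\eps}$ and $\varphi_s^G$ obey the same Loewner ODE with field $G(\cdot,s)$, but starting from slightly different initial data at time $t+\eps$. Writing $\Psi_{s,r}$ for the resulting two-parameter evolution family (so that $\varphi_s^G=\Psi_{s,r}\circ\varphi_r^G$), a Gronwall-type argument applied to the linearized Loewner equation yields
\begin{equation*}
\varphi_s^{\eps}(z)-\varphi_s^G(z) \;=\; \eps\, d(\Psi_{s,t})_{\varphi_t^G(z)}\bigl[h(\varphi_t^G(z))-G(\varphi_t^G(z),t)\bigr]+o(\eps).
\end{equation*}
Differentiating the identity $\Psi_{s,t}\circ\varphi_t^G=\varphi_s^G$ in $z$ gives $d(\Psi_{s,t})_{\varphi_t^G(z)}=d(\varphi_s^G)_z\cdot[d(\varphi_t^G)_z]^{-1}$, which puts the expansion in the form stated in the theorem at finite $s$.

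Finally I would multiply by $e^s$ and send $s\to\infty$. By definition of $\mathcal{S}_n^0$ one has $e^s\varphi_s^G\to f$ locally uniformly in $\B^n$, whence by the Weierstrass theorem also $e^s\,d(\varphi_s^G)_z\to d(f)_z$ locally uniformly, while $e^s\varphi_s^{\eps}\to f^{\eps}$ by construction. Passing to the limit produces exactly the formula in the statement. \textbf{The main obstacle} is justifying that the $o(\eps)$ in the preceding step is uniform in $s\in[t+\eps,\infty)$, so that the two limits $s\to\infty$ and $\eps\to0+$ can be interchanged. This requires controlling the variational (linearized) equation \emph{uniformly} on $[t+\eps,\infty)$, and it is precisely here that the compactness of $\M$ in $\HB$ together with the exponential normalization $e^s$ built into the definition of $\mathcal{S}_n^0$ become essential: compactness gives the necessary uniform bounds on $G(\cdot,s)$ and its derivative, and the factor $e^s$ cancels the exponential contraction of $\varphi_s^G$ so that the final error still satisfies $r^{\eps}/\eps\to0$ locally uniformly.
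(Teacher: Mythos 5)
Your route is the same as the paper's: a needle variation of the Herglotz field at a Lebesgue (regular) point, a first--order expansion of the flow at the end of the needle, propagation of that perturbation by the linearized (variational) equation, and finally multiplication by $e^s$ and passage to $s\to\infty$. Up to any finite time your argument is complete, and the placement of the needle on $[t,t+\eps]$ rather than $(t-\eps,t)$ (as in the paper) is immaterial. However, the step you yourself single out as ``the main obstacle'' --- uniformity in $s\in[t+\eps,\infty)$ of the $o(\eps)$ term, which is what allows the limits $s\to\infty$ and $\eps\to0+$ to be interchanged --- is precisely where the real work of the proof lies, and the justification you offer is not sufficient as stated. Compactness of $\M$ only gives bounds on $G(\cdot,s)$ and its derivative that are uniform on compacta; feeding such bounds into Gronwall's lemma for the variational equation produces constants of order $e^{C(s-t)}$, which blow up as $s\to\infty$, and multiplying by $e^s$ does not by itself repair this.

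The missing quantitative input, which is how the paper closes exactly this gap (proof of Theorem 3.2), consists of three estimates: (i) the normalization $d(g)_0=-\id$ for $g\in\M$ together with compactness gives $\|\id+d(g)_w\|\le C_r\|w\|$ for $\|w\|\le r$; (ii) the flows decay, $\|\varphi^G_{s,\tau}(z)\|\le C_r e^{-\tau}$ (and likewise for the perturbed flow); hence, writing the equation for $v^\eps_{s}:=e^{s}\varphi^{\eps}_{s}$ with the field $\tilde G(w,s)=w+e^{s}G(e^{-s}w,s)$, the coefficient matrix of the linearized equation along the trajectories is $O(e^{-\tau})$, i.e.\ integrable on $[t,\infty)$ uniformly in $\eps$; and (iii) the difference between the perturbed and unperturbed coefficient matrices is $O(\eps e^{-\tau})$, by the uniform Lipschitz bound on $\M$ and the $O(\eps)$ estimate on $\varphi^{\eps}_{\tau}-\varphi_{\tau}$. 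With (i)--(iii), Gronwall applied to the associated transition matrices $Y^{\eps}_{s,\tau}$ gives $\|Y^{\eps}_{s,\tau}\|\le C_r$ and $\|Y^{\eps}_{s,\tau}-Y^{0}_{s,\tau}\|\le C_r\eps$ uniformly in $\tau\ge t$, and only then does the error remain $o(\eps)$ uniformly in time so that $s\to\infty$ may be taken. Your phrase that ``$e^s$ cancels the exponential contraction'' points in the right direction, but without estimates of the type (i)--(iii) the decisive step of the theorem remains unproved.
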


The variations $f^{\eps}$ in Theorem \ref{thm:main1} will be constructed with 
help of ``spike variations''. This is a well--known method in control theory and the
calculus of variations which goes back at least to Weierstra{\ss}. In proving
 Theorem \ref{thm:main1} we shall show  that it is possible to
 modify this technique in such a way that it can be applied for the infinite--dimensional 
Fr\'echet space $\HB$ (endowed with the standard compact--open topology).
We note that a different variational technique in $\mathcal{S}_n^0$
 has recently been  developed by Bracci, Graham, Hamada and Kohr \cite{BHKG}. Theorem \ref{thm:main1} has the advantage that it works for
\textit{any} function $f \in \mathcal{S}_n^0$, while the method of \cite{BHKG}
is restricted to those functions in $\mathcal{S}_n^0$ which can be embedded in a
so--called ``ger\"aumig'' Loewner chain, see \cite{BHKG} for details.

\medskip

One main field of application of the variational formula of Theorem
\ref{thm:main1} is the study of extremal problems in the class $\mathcal{S}_n^0$.
We call a function $F \in \mathcal{S}_n^0$ an extremal function for a
functional $\Phi : \mathcal{S}_n^0 \to \C$
if $\Re \Phi(f)\le\Re \Phi(F)$ for every $f \in \mathcal{S}_n^0$. 
Here and henceforth we assume that the
functional $\Phi : \mathcal{S}_n^0 \to \C$ is  complex differentiable
 in the sense of R.~Hamilton's Fr\'echet space calculus
as developed in \cite{Ham82} 
(see Definition \ref{def:der} below for details).

\begin{theorem} \label{thm:main2a}
Let $F \in \mathcal{S}_n^0$ be an extremal function for a functional $\Phi :
\mathcal{S}_n^0 \to \C$  with complex derivative $L$ at $F$.
 Suppose that $G(z,t)$ is a Herglotz vector field in the
class $\M$ such that $F=f^G$.  For each $t \ge 0$ let $L_t$ be the
continuous linear functional  on $\HB$ defined by
$$ L_t(h):=L \left( d(F)_z \cdot \left[ d (\varphi^G_t)_z
  \right]^{-1} \cdot h\left(\varphi^G_t\right) \right) \, , \qquad h \in \HB \, .$$
Then for a.e.~$t \ge 0$, 
$$ \Re L_t(h) \le \Re L_t(G(\cdot,t)) \quad \text{ for all } h \in \M \, .$$
\end{theorem}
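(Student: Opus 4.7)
The approach is to apply the variational formula of Theorem~\ref{thm:main1} and combine it with the definition of complex differentiability to extract a first--order necessary condition from extremality. Fix $h \in \M$ and a time $t \ge 0$ for which the variation of Theorem~\ref{thm:main1} is available, and set
$$ v(z) := d(F)_z \cdot \left[ d(\varphi^G_t)_z \right]^{-1} \bigl[ h(\varphi^G_t(z)) - G(\varphi^G_t(z), t) \bigr]\,, $$
so that Theorem~\ref{thm:main1} produces a family $f^{\eps} \in \mathcal{S}_n^0$ with $f^{\eps} = F + \eps v + r^{\eps}$ and $r^{\eps}/\eps \to 0$ in the compact--open topology as $\eps \to 0+$.

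Extremality of $F$ gives $\Re \Phi(f^{\eps}) \le \Re \Phi(F)$ for every small $\eps>0$, and the complex differentiability of $\Phi$ at $F$ with derivative $L$ (in Hamilton's sense) yields the expansion
$$ \Phi(f^{\eps}) - \Phi(F) \;=\; \eps\, L(v) + o(\eps) \qquad (\eps \to 0+)\,. $$
Dividing by $\eps$ and passing to the limit $\eps \to 0+$ gives $\Re L(v) \le 0$. Since $L$ is $\C$--linear, the very definition of $L_t$ rearranges this to
$$ \Re L_t(h) \;\le\; \Re L_t\bigl(G(\cdot, t)\bigr)\,, $$
which is the required inequality at this $t$ and $h$.

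The one remaining subtlety is the order of quantifiers: the argument above produces, for each fixed $h \in \M$, a null set $N_h \subset \R^+$ off which the inequality holds, and we want a single null set valid for all $h \in \M$ simultaneously. Here I would exploit that $\M$ is a compact --- and therefore separable --- subset of the Fr\'echet space $\HB$: picking a countable dense sequence $\{h_k\} \subset \M$ and setting $N := \bigcup_k N_{h_k}$, the continuity of $L_t$ on $\HB$ upgrades the inequality from the dense set $\{h_k\}$ to all of $\M$ for every $t \notin N$. The main technical point to watch is that Hamilton's complex derivative $L$ correctly linearises $\Phi$ along $f^{\eps}$, whose remainder $r^{\eps}$ is controlled only in the compact--open topology; this is precisely the setting the Hamilton calculus of Definition~\ref{def:der} is designed to handle, so the expansion above should follow directly from that definition together with the continuity of $L$.
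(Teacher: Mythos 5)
Your proposal is correct and follows essentially the paper's own route: apply the variational formula of Theorem~\ref{thm:main1} (needle variations at a regular time), use extremality of $F$ together with the Hamilton-calculus linearisation (the content of Lemma~\ref{lem:ham}(b), proved via Hamilton's Lemma 3.3.1) to get $\Re L(v)\le 0$, and rewrite this as $\Re L_t(h)\le \Re L_t(G(\cdot,t))$. Your separability patch for the null set is valid but not needed in the paper's setup, since there the exceptional set is the complement of the regular set $R_G$ of the Herglotz vector field $G$ (Lemma~\ref{lem:lebesgue}), which is independent of $h$, so the inequality holds for all $h\in\M$ at every regular $t$ simultaneously.
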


Theorem \ref{thm:main2a} is in fact a version of Pontryagin's maximum principle
for the case of the Loewner equation in higher dimensions. 
It generalizes earlier well--known work on control theory of the Loewner equation in one
dimension which has been initiated by Goodman \cite{Goodman},
Popov \cite{Popov} and Friedland \&
Schiffer \cite{FS0,FS1},  and which has been developed into a powerful theory by D.~Prokhorov \cite{Pro92,Pro93,Pro02}, see also  \cite{Ro}.

\medskip

At first sight it is not clear that Pontryagin's maximum principle (Theorem \ref{thm:main2a}) carries
any useful information about the Herglotz vector field $G(\cdot,t)$ at all, 
simply because the linear functionals $L_t$ in Theorem
\ref{thm:main2a} might be constant on the class $\M$. In particular,
Theorem \ref{thm:main2a} alone is not sufficient to deduce that $G(\cdot,t)$ is a
support point (see Definition \ref{def:supp}) in the class $\M$.
However, refering to a deep result of Docquier and Grauert \cite{DG}, it has
recently been observed by Arosio, Bracci and Wold \cite{ABFW} that all domains
$\varphi_t^G(\B^n)$ are \textit{Runge domains}. Using this Runge property 
we shall show in Proposition \ref{prop:lin} below that if $L$ is not constant
on $\mathcal{S}^0_n$, then 
$L_t$ is
\textit{never} constant on $\M$. In combination with 
Pontryagin's maximum principle in the form of Theorem \ref{thm:main2a}, we are
therefore led to the following necessary  condition for extremal problems in the class $\mathcal{S}_n^0$.

\begin{theorem} \label{thm:main3a}
Let $F \in \mathcal{S}_n^0$ be an extremal function for a functional
$\Phi : \mathcal{S}_n^0 \to \C$ with complex derivative $L$ at $F$. Suppose that
$L$ is not constant on $\mathcal{S}_n^0$.
If $G(z,t)$ is a Herglotz vector field
in the class $\M$  such  that $F=f^G$, then 
$G(\cdot,t)$ is a support point in the class $\M$ for a.e.~$t \ge 0$.
\end{theorem}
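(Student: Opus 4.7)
The plan is to obtain Theorem \ref{thm:main3a} as a direct corollary of Pontryagin's maximum principle (Theorem \ref{thm:main2a}) together with the non-constancy statement announced as Proposition \ref{prop:lin}. Starting from the hypotheses, Theorem \ref{thm:main2a} applied to $F = f^G$ and its derivative $L$ produces, outside a null set $N_1 \subset \R^+$, the inequality
$$ \Re L_t(h) \le \Re L_t(G(\cdot,t)) \qquad \text{for all } h \in \M. $$
Since $G$ is a Herglotz vector field, we also have $G(\cdot,t) \in \M$ outside a null set $N_2$. For every $t \in \R^+ \setminus (N_1 \cup N_2)$ the continuous linear functional $L_t$ on $\HB$ therefore attains the supremum of $\Re L_t$ over $\M$ at the element $G(\cdot,t) \in \M$.

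To upgrade ``maximiser'' to ``support point'' in the sense of Definition \ref{def:supp} we need $L_t$ to be non-constant on $\M$; this is precisely what Proposition \ref{prop:lin} delivers under the standing assumption that $L$ is not constant on $\mathcal{S}_n^0$. Combining the two facts, $G(\cdot,t)$ satisfies both clauses of the definition of a support point of $\M$ for a.e.~$t \ge 0$, which is the assertion of the theorem.

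The genuine work is thus hidden in Proposition \ref{prop:lin}, and that is where I expect the main obstacle to lie. The natural route is contrapositive: assume that for some $t_0$ the functional $L_{t_0}$ is constant on $\M$, and deduce that $L$ must be constant on all of $\mathcal{S}_n^0$. The mechanism of transfer is the variational formula in Theorem \ref{thm:main1}, which shows that values of $L$ on spike-perturbations of $F$ near time $t_0$ are governed precisely by the difference $L_{t_0}(h) - L_{t_0}(G(\cdot,t_0))$; constancy of $L_{t_0}$ on $\M$ therefore pins down $L$ on a whole family of elements of $\mathcal{S}_n^0$ arising from such perturbations.

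The step that makes this argument actually reach every element of $\mathcal{S}_n^0$ is the Runge property of the images $\varphi_t^G(\B^n)$ recorded in \cite{ABFW} via Docquier--Grauert. Roughly, Runge approximation allows one to realise any prescribed biholomorphic perturbation on the image domain by a polynomial, and hence (after pulling back through $\varphi_t^G$ and then taking the normalised Loewner limit) by an admissible Herglotz modification of $G$; this density is exactly what is needed to propagate the local constancy of $L$ along spike variations at time $t_0$ to global constancy on $\mathcal{S}_n^0$. Once this is in place, Proposition \ref{prop:lin} follows, and the proof of Theorem \ref{thm:main3a} is complete by the two-line combination outlined above.
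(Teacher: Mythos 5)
Your argument is correct and is essentially the paper's own: Theorem \ref{thm:main3a} is deduced there precisely by combining the maximum principle (Theorem \ref{thm:main2} (a), i.e.\ Theorem \ref{thm:main2a}, valid at every regular point, together with Lemma \ref{lem:lebesgue} which gives that the regular set $R_G$ has full measure) with Proposition \ref{prop:lin}, which is the same two-step combination you describe. Only your speculative sketch of how Proposition \ref{prop:lin} is proved deviates from the paper, which does not use Theorem \ref{thm:main1} or admissible Herglotz modifications of $G$ at all, but argues directly with the linear functional: constancy of $L_t$ on $\M$ applied to the perturbations $-z+\eps P\in\M$ yields $L_t(P)=0$ for every polynomial map $P$ with $P(0)=0$ and $d(P)_0=0$, and the Runge property of $\varphi_t(\B^n)$ then allows approximation of any $g\in\HB$ vanishing to second order at $0$ by maps $P_k\circ\varphi_t$, forcing $L$ to be constant on $\mathcal{S}_n^0$; since you invoke the proposition only as stated, this does not affect the correctness of your proof.
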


Roughly speaking, Theorem \ref{thm:main3a} says that if a Herglotz vector
field $G(z,t)$ generates an extremal function in the class 
$\mathcal{S}^0_n$ via the Loewner equation, then for a.e.~$t \ge 0$ the function
$G(\cdot,t) \in \M$ itself has to be extremal in the class $\M$.
As an illustration of the use of Theorem \ref{thm:main3a} we prove 
in Corollary \ref{cor:main} below a generalization of a recent result
due to Bracci, Graham, Hamada and Kohr \cite{BHKG} about support points in $\mathcal{S}_n^0$.

\medskip

We finally point out another consequence of Theorem \ref{thm:main1}.

\begin{theorem} \label{thm:pommerenke}
Let $F \in \mathcal{S}_n^0$ be an extremal function for a functional $\Phi :
\mathcal{S}_n^0 \to \C$ with complex derivative $L$ at $F$. Then
$$ \max \limits_{h \in \mathcal{M}_n} \Re L(d(F)_z \cdot  h)=-\Re L(F) \,
.$$ 
\end{theorem}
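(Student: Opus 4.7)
The plan is to prove the equality by constructing two complementary one-parameter variations of $F$ inside $\mathcal{S}_n^0$, each yielding one of the two matching inequalities. Fix a Herglotz field $G$ in $\M$ with $F = f^G$, and for the shifting step below assume that $G$ has been modified on a Lebesgue-null set in $t$ so that $t=0$ is a Lebesgue point of $t \mapsto G(\cdot,t)$ in $\HB$ and $G(\cdot,0) \in \M$; this leaves $\varphi^G_t$ and hence $F$ unchanged.

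\emph{Upper bound via prepending.} For $h \in \M$ let $\psi_s$ denote the autonomous Loewner evolution solving $\dot\psi_s = h(\psi_s)$, $\psi_0 = \id$, and for $s>0$ define the Herglotz field
\[
\tilde G^s(z,t) \;=\; \begin{cases} h(z), & 0 \le t < s,\\ G(z,\,t-s), & t \ge s.\end{cases}
\]
Concatenating the Loewner ODE for $\tilde G^s$ gives $\varphi^{\tilde G^s}_{s+u} = \varphi^G_u \circ \psi_s$, and passing to the limit $u \to \infty$ collapses this to the explicit formula
\[
\tilde F^s := f^{\tilde G^s} \;=\; e^s\,F \circ \psi_s \;\in\; \mathcal{S}_n^0.
\]
Since $\psi_s(z) = z + s\,h(z) + o(s)$ locally uniformly, a first-order expansion yields $\tilde F^s = F + s[F + d(F)_z\,h] + o(s)$ in the topology of $\HB$. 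Extremality of $F$, Fr\'echet differentiability of $\Phi$ at $F$ with derivative $L$, and continuity of $L$ on $\HB$ then produce $\Re L(F + d(F)_z\,h) \le 0$ after dividing by $s$ and letting $s \to 0^+$. Hence $\Re L(d(F)_z\,h) \le -\Re L(F)$ for every $h \in \M$.

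\emph{Lower bound and attainment via shifting.} For $s \ge 0$ set $G^s(z,t) := G(z,t+s)$ and $F_s := f^{G^s} \in \mathcal{S}_n^0$. The semigroup relation $\varphi^G_{s,s+u} \circ \varphi^G_s = \varphi^G_{s+u}$, combined with the limit defining $f^{G^s}$, produces the functional identity
\[
F_s \circ \varphi^G_s \;=\; e^{-s}\,F \qquad\text{on } \B^n.
\]
Expanding both sides at $s = 0$ (using that $t=0$ is a Lebesgue point of $G$ to get $\varphi^G_s = \id + s\,G(\cdot,0) + o(s)$ locally uniformly) gives $F_s - F = -s[F + d(F)_z\,G(\cdot,0)] + o(s)$ locally uniformly. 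Extremality now forces the reverse inequality $\Re L(d(F)_z\,G(\cdot,0)) \ge -\Re L(F)$. Since $G(\cdot,0) \in \M$, combining with the upper bound at $h = G(\cdot,0)$ shows that the supremum is attained and equals $-\Re L(F)$.

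The main technical difficulty is to establish both first-order asymptotics rigorously in the locally uniform topology on $\HB$, with remainders genuinely $o(s)$, so that the continuous linear functional $L$ commutes with the error and $\Phi(\tilde F^s) - \Phi(F)$, $\Phi(F_s) - \Phi(F)$ are governed by their linearizations. Related to this, the Lebesgue-point reduction for the shifted variation is essential: without it, $\dot\varphi^G_s = G(\varphi^G_s, s)$ only holds a.e., so one must either work from a Lebesgue point or approximate via Lebesgue points and use compactness of $\M$ to extract a convergent subsequence. The remaining verifications (that $\tilde G^s$ and $G^s$ are Herglotz in $\M$, hence $\tilde F^s, F_s \in \mathcal{S}_n^0$) are immediate from the definitions.
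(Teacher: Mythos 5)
Your argument is correct in substance, but it takes a genuinely different route from the paper. The paper deduces the identity from its maximum principle machinery: Theorem \ref{thm:main2}(a) (needle variations) shows that $m(t):=\max_{h\in\M}\Re L_t(h)$ is attained at $G(\cdot,t)$ for a.e.\ $t$, Theorem \ref{thm:main2}(b) shows via a local Lipschitz/envelope argument that $m$ is constant in $t$, and the value $-\Re L(F)$ is then computed from the limit $t\to\infty$, using $\|h(z)+z\|\le M_r\|z\|^2$ and the decay of $\varphi_t$; the theorem is the evaluation $m(0)=\lim_{t\to\infty}m(t)$. You avoid needle variations and the conservation law altogether and get the two inequalities from two explicit one--parameter deformations inside $\mathcal{S}_n^0$: prepending the constant field $h$ (giving $e^sF\circ\psi_s\in\mathcal{S}_n^0$ and $\Re L(d(F)_z\, h)\le-\Re L(F)$ for every $h\in\M$, via Lemma \ref{lem:ham}(b)) and time--shifting $G$ (giving attainment). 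Your route is more elementary and self-contained for this single statement, and it exhibits a maximizer concretely as a limit of averages of $G(\cdot,t)$ at $t=0^+$, in effect the $t\to 0$ counterpart of the paper's $t\to\infty$ computation; the paper's heavier route buys the stronger statements (the pointwise maximum principle for a.e.\ $t$ and the constancy of $m$) of which Theorem \ref{thm:pommerenke} is a by-product.

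One claim in your reduction is false as stated and must be replaced by the fallback you yourself mention: you cannot in general ``modify $G$ on a null set so that $t=0$ is a Lebesgue point,'' because the averages $\frac1s\int_0^s G(\cdot,\tau)\,d\tau$ are insensitive to null modifications and need not converge as $s\to0^+$. The correct fix is compactness: the averages form a normal family and any locally uniform limit again satisfies $g(0)=0$, $dg_0=-\id$, $\Re\langle g(z),z\rangle\le0$, hence lies in $\M$; so choose $s_k\to0^+$ with $\frac1{s_k}\int_0^{s_k}G(\cdot,\tau)\,d\tau\to g\in\M$, obtain $\varphi^G_{s_k}=\id+s_k g+o(s_k)$ locally uniformly, run your shifted variation along $s_k$, and conclude $\Re L(d(F)_z\, g)\ge-\Re L(F)$, which together with the prepending inequality gives equality and attainment at $g$. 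The other point you flag, passing from $F_s\circ\varphi^G_s=e^{-s}F$ to the locally uniform expansion of $F_s-F$, is genuinely needed but routine: on a fixed compact $K\subset\B^n$ and small $s$ one has $K\subset\varphi^G_s(\B^n)$ (from $\|\varphi^G_s-\id\|\le\gamma s$ and injectivity), so $F_s=e^{-s}F\circ(\varphi^G_s)^{-1}$ on $K$, and $(\varphi^G_s)^{-1}(w)=w-\int_0^s G(w,\tau)\,d\tau+O(s^2)$ uniformly on $K$ by the Lipschitz bound on $\M$; Taylor expansion of $F$ with Cauchy estimates then gives the expansion along $s_k$. With these two repairs your proof is complete.
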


Theorem \ref{thm:pommerenke} extends a result of Pommerenke (see
\cite[p.~185]{Pom}), which deals with the case of dimension $n=1$ (and
functionals of finite degree), to the cases $n>1$ and arbitrary complex
differentiable functionals. We note that the case $n=1$ allows a fairly
elementary proof, which is based on the 
``lucky accident'' (see \cite[p.~231]{Dur83})
 that  the Koebe functions
$$k_{\zeta}(z):=\frac{z}{\left(1+\overline{\zeta} z\right)^2} \, , \qquad \zeta \in \partial
\B^1 \, , $$
generate the set $\ext \mathcal{M}_1$ of  extreme points of $\mathcal{M}_1$ via
$$ \ext \mathcal{M}_1=\left\{ -z \frac{\zeta+z}{\zeta-z} \, : \, \zeta
  \in \partial \B^1 \right\}=
\bigg\{ -\left[ d\left( k_{\zeta} \right)_z \right]^{-1}
  \cdot k_{\zeta}(z)  \, : \, \zeta \in \partial \B^1 \bigg\} \, .$$
For $n>1$, however, the set $\ext \M$ of  extreme points of $\M$ is not known
(see \cite{Voda} for recent results in this direction), so we employ a
completely different approach for the proof of Theorem \ref{thm:pommerenke}.

\medskip

This paper is organized in the following way. We start in Section \ref{sec:2}
by constructing variations of evolution families for the Loewner
equation in higher dimensions. In Section \ref{sec:3} we generalize this result
to produce variations in the class $\mathcal{S}_n^0$ and we prove Theorem
\ref{thm:main1}. We also produce variations of parametric representations,
which partly extend the recent results in \cite{BHKG}. 
In the final Section \ref{sec:4} we apply the results of Sections \ref{sec:2}
and \ref{sec:3} to study extremal problems in the class $\mathcal{S}_n^0$ and
we prove  Theorem \ref{thm:main2a}, Theorem \ref{thm:main3a} and Theorem \ref{thm:pommerenke}.

\section{Variations of evolution families} \label{sec:2}

In this section, we construct variations of  Loewner evolution families.

\begin{definition}
Let $G(z,t)$ be a Herglotz vector field in the class $\M$. Denote
for fixed $s \ge 0$ by $\varphi^G_{s,t}$ the solution to
  \begin{equation} \label{eq:L2} \begin{array}{rcl}
\dot{\varphi}_{s,t} (z)&=& G(\varphi_{s,t}(z),t) \quad \text{ for a.e. } t \ge s \,
, \\[2mm]
\varphi_{s,s}(z)&=&z \in \B^n\, .
\end{array}
\end{equation}
We call $(\varphi^G_{s,t})_{0 \le s \le t}$ \textit{the evolution family
  generated by $G(z,t)$}. 
\end{definition}

\begin{lemma} \label{lem:lebesgue}
Let $G(z,t)$ be a Herglotz vector field in the class $\M$.
Then there exists a set $E_G \subseteq \R^+$ of zero measure such that
for all $t \in (0,\infty) \backslash E_G$ the condition
\begin{equation} \label{eq:lebesgue}
 G(z,t)=\lim \limits_{\eps \to 0+} \frac{1}{\eps} \int \limits_{t-\eps}^t G(z,\tau) \, d\tau
\, 
\end{equation}
holds locally uniformly w.r.t.~$z \in \B^n$. 
\end{lemma}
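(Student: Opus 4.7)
The statement is a Lebesgue differentiation theorem for $\M$-valued maps on $\R^+$, with the twist that we want locally uniform, rather than pointwise, convergence in $z$. The plan is to introduce the averages
$$ H^{\eps}_t(z) := \frac{1}{\eps} \int \limits_{t-\eps}^t G(z,\tau) \, d\tau \, , $$
handle a countable dense set of base points by the scalar Lebesgue differentiation theorem, and then upgrade pointwise convergence to locally uniform convergence using the compactness of $\M$ in $\HB$.

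First I would verify that each $H^{\eps}_t$ lies again in $\M$: the defining conditions $h(0)=0$, $dh_0 = -\id$, and $\Re \langle h(z), z \rangle \le 0$ are all preserved under time-averaging, while holomorphicity in $z$ follows by differentiation under the integral sign, using that $\M$ is bounded locally uniformly in $z \in \B^n$ by its compactness. Next, I would fix a countable dense set $\{z_j\}_{j \in \N} \subset \B^n$ and apply the classical Lebesgue differentiation theorem to each coordinate of each measurable function $G(z_j, \cdot) : \R^+ \to \C^n$. Taking the union of the countably many resulting exceptional null sets produces the desired set $E_G \subset \R^+$, and for every $t \in (0,\infty) \setminus E_G$ one has $H^{\eps}_t(z_j) \to G(z_j,t)$ as $\eps \to 0+$ for every $j \in \N$.

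To finish, for such a $t$ and an arbitrary sequence $\eps_k \to 0+$, since $\{H^{\eps_k}_t\}$ is contained in the compact set $\M \subset \HB$, some subsequence converges locally uniformly on $\B^n$ to a map $H \in \M$. The previous step forces $H(z_j) = G(z_j,t)$ for every $j$, and the identity theorem then gives $H = G(\cdot,t)$; since every sequence $\eps_k \to 0+$ admits a subsequence with this common limit, the whole net $H^{\eps}_t$ converges to $G(\cdot,t)$ locally uniformly on $\B^n$, which is precisely the claim. The only genuine obstacle is the passage from scalar Lebesgue differentiation, which yields a $z$-dependent exceptional set, to a single null set $E_G$ carrying locally uniform convergence; the compactness of $\M$ in $\HB$ together with the identity theorem for holomorphic maps is what does the essential work there.
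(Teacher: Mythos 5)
Your proposal is correct and follows essentially the same route as the paper: apply the scalar Lebesgue differentiation theorem at a countable dense set of points (using that $t\mapsto G(z,t)$ is essentially bounded since $\M$ is compact), take the union of the null sets, and upgrade pointwise convergence on the dense set to locally uniform convergence via the normality/compactness of $\M$. Your subsequence-extraction argument with identification of the limit on the dense set is just an unwound form of the Vitali theorem the paper invokes, so there is no substantive difference.
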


\begin{proof} We fix $z \in \B^n$. Since $G(\cdot, t) \in \M$ for a.e.~$t>0$
  and $\M$ is a compact subset of $\HB$, the measurable function
$t \mapsto G(z,t)$ is (essentially) bounded on the interval
$(0,\infty)$. Therefore,  there exists a set $E_G(z) \subseteq \R^+$ of zero measure such that
condition (\ref{eq:lebesgue}) holds for all $t \in \R^+ \backslash E_G(z)$.
Now choose a dense countable set $A \subseteq \B^n$ and set $E_G:=\cup_{a \in
  A} E_G(a)$. Then $E_G$ has zero measure and (\ref{eq:lebesgue}) holds
for every $t \in \R^+ \backslash E_G$ and every point $z$ 
 in the dense subset $A \subseteq \B^n$. Since $\M$ is
a normal family and $G(\cdot,t) \in \M$ for a.e.~$t \ge 0$, this implies that
(\ref{eq:lebesgue}) holds locally uniformly in $\B^n$ for every fixed $t \in
\R^+\backslash E_G$ by Vitali's theorem.
\end{proof}

\begin{remark} \label{rem:lebesgue}
We call the set $R_G:=\R^+\backslash E_G$ the \textit{regular set of the
  Herglotz vector field $G(z,t)$} and every $T \in
R_G$ is called a \textit{regular point for $G(z,t)$}.
Note that if $T$ is a regular point for $G(z,t)$ and $\varphi_{s,t}:=\varphi^G_{s,t}$, then
Lemma \ref{lem:lebesgue} implies that for any $s <T$,
$$  G(\varphi_{s,T}(z),T)=\lim \limits_{\eps \to 0+} \frac{1}{\eps} \int
\limits_{T-\eps}^T G(\varphi_{s,\tau}(z),\tau) \, d\tau \, $$
locally uniformly w.r.t.~$z \in \B^n$,
since $\varphi_{s,\cdot}(z)$ is absolutely continuous on compact intervals of
$\R^+_0$ locally uniformly w.r.t.~$z \in \B^n$.
\end{remark}

We can now state the main result of this section.

\begin{theorem} \label{thm:main}
Let $G(z,t)$ be a Herglotz vector field in the class $\M$ with
associated evolution family $\varphi_{s,t}:=\varphi^{G}_{s,t}$ and let $T \in
R_G$ be a regular point. Then for any  $h \in \M$ and any $\eps \in (0,T)$
there exists an  evolution family $(\varphi^{\eps}_{s,t})_{0 \le s \le t}$ such that
$$ \varphi^{\eps}_{s,t}=\varphi_{s,t}+\eps \, \alpha^h_{s,t} +o^{\eps}_{s,t}\, , $$
where
$$ \alpha^h_{s,t}=\begin{cases} 0 & \text{ if } s \le t <T-\eps \text{ or }
  T \le s \le t \, ,\\[2mm]
d(\varphi_t)_z \cdot \left[
  d(\varphi_T)_z \right]^{-1}\cdot  \big[ h(\varphi_T)-G(\varphi_T,T)
\big]  & \text{ if } s < T \le t \, .
\end{cases} \, $$
Here, $o^{\eps}_{s,t}\in \HB$ indicates a term such that
$$ \lim \limits_{\eps \to 0+} \frac{o^{\eps}_{s,t}}{\eps}=0 \quad \text{
  locally uniformly in } \B^n $$
for any fixed $s,t$ such that $s<T \le t$.
\end{theorem}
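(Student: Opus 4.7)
The plan is a classical needle (or spike) variation. Given $h\in\M$ and $\eps\in(0,T)$, I would define
\begin{equation*}
 G^{\eps}(z,t):=\begin{cases} h(z), & t\in[T-\eps,T],\\ G(z,t), & t\in\R^+\setminus[T-\eps,T],\end{cases}
\end{equation*}
which is again a Herglotz vector field in $\M$, and let $\varphi^{\eps}_{s,t}$ be its evolution family. If $[s,t]\cap[T-\eps,T]=\emptyset$, then $G^\eps=G$ on $[s,t]$, so $\varphi^\eps_{s,t}=\varphi_{s,t}$ identically and one may take $o^\eps_{s,t}\equiv 0$; this disposes of the two ``trivial'' cases $s\le t<T-\eps$ and $T\le s\le t$.

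For the main case $s<T\le t$, and $\eps$ small enough that $s<T-\eps$, the strategy is to first quantify the perturbation at time $T$ and then transport it forward by the unperturbed flow. Both flows agree on $[s,T-\eps]$, so setting $w_\eps:=\varphi_{s,T-\eps}(z)=\varphi^\eps_{s,T-\eps}(z)$ and using the composition rule, $\varphi_{s,T}(z)=\varphi_{T-\eps,T}(w_\eps)$ and $\varphi^\eps_{s,T}(z)=\varphi^\eps_{T-\eps,T}(w_\eps)$. Expanding both in integrated form, I would apply Remark \ref{rem:lebesgue} (the locally uniform version of Lebesgue differentiation, transported along the flow) to $\int_{T-\eps}^{T}G(\varphi_{T-\eps,\tau}(w_\eps),\tau)\,d\tau$, and standard Lipschitz estimates for the autonomous flow of the holomorphic map $h$ to the analogous integral, to obtain locally uniformly in $z$
\begin{equation*}
 \varphi_{T-\eps,T}(w_\eps)=w_\eps+\eps\,G(w_\eps,T)+o(\eps),\qquad \varphi^\eps_{T-\eps,T}(w_\eps)=w_\eps+\eps\,h(w_\eps)+o(\eps).
\end{equation*}
Since $w_\eps=\varphi_{s,T}(z)+O(\eps)$ and both $h$ and $G(\cdot,T)$ are locally Lipschitz on $\B^n$, replacing $w_\eps$ by $\varphi_{s,T}(z)$ costs only an additional $o(\eps)$, and subtraction yields
\begin{equation*}
 \varphi^\eps_{s,T}(z)-\varphi_{s,T}(z)=\eps\bigl[h(\varphi_{s,T}(z))-G(\varphi_{s,T}(z),T)\bigr]+o(\eps).
\end{equation*}

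To pass from $T$ to $t\ge T$, note that $G^\eps=G$ on $[T,t]$, so $\varphi^\eps_{s,t}=\varphi_{T,t}\circ\varphi^\eps_{s,T}$ and $\varphi_{s,t}=\varphi_{T,t}\circ\varphi_{s,T}$. A holomorphic first-order Taylor expansion of $\varphi_{T,t}$ around $\varphi_{s,T}(z)$, with $O(\eps^2)$ remainder obtained from Cauchy estimates and the $O(\eps)$ bound just established, gives
\begin{equation*}
 \varphi^\eps_{s,t}(z)-\varphi_{s,t}(z)=\eps\,d(\varphi_{T,t})_{\varphi_{s,T}(z)}\cdot\bigl[h(\varphi_{s,T}(z))-G(\varphi_{s,T}(z),T)\bigr]+o(\eps),
\end{equation*}
and the chain rule applied to $\varphi_{s,t}=\varphi_{T,t}\circ\varphi_{s,T}$ produces $d(\varphi_{T,t})_{\varphi_{s,T}(z)}=d(\varphi_{s,t})_z\cdot[d(\varphi_{s,T})_z]^{-1}$, which is exactly the prefactor appearing in $\alpha^h_{s,t}$.

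The main technical obstacle — where the real work lies — is verifying that every $o(\eps)$ above is \emph{locally uniform} in $z\in\B^n$, not merely pointwise. For the $G$-spike this rests on the locally uniform regularity at $T$ transported along the flow (Remark \ref{rem:lebesgue}); for the $h$-spike and the Taylor step it uses local Lipschitz and Cauchy-type bounds for holomorphic maps; and the compactness of $\M\subset\HB$ supplies the uniform control of $G(\cdot,\tau)$ and its derivatives on compact subsets of $\B^n$, uniformly in $\tau$. A normal-family/Vitali argument, already employed in the proof of Lemma \ref{lem:lebesgue}, converts the pointwise $o(\eps)$ statements into locally uniform ones wherever this is not already automatic.
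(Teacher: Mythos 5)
Your proposal is correct, and its overall skeleton (needle variation of the Herglotz field, trivial cases off the spike, expansion at time $T$ using the regular point $T\in R_G$) coincides with the paper's. The genuine difference is in how you propagate the perturbation from time $T$ to a fixed $t\ge T$. The paper notes that $\varphi^{\eps}_{s,t}$ solves the same Loewner ODE on $[T,\infty)$ with an $\eps$-perturbed value at time $T$, invokes the ODE theorem on differentiability with respect to initial conditions to get the variational equation $\dot\psi_t=\frac{\partial G}{\partial z}(\varphi_{s,t}(z),t)\psi_t$, identifies its solution with $d(\varphi_{s,t})_z\cdot[d(\varphi_{s,T})_z]^{-1}\cdot[h(\varphi_{s,T})-G(\varphi_{s,T},T)]$ by uniqueness, and then upgrades the pointwise limit to a locally uniform one via a Gronwall bound and Vitali's theorem. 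You instead exploit the exact identities $\varphi^{\eps}_{s,t}=\varphi_{T,t}\circ\varphi^{\eps}_{s,T}$ and $\varphi_{s,t}=\varphi_{T,t}\circ\varphi_{s,T}$ (valid because the two fields agree a.e.\ on $[T,\infty)$, and used by the paper itself only in the proof of Lemma \ref{lem:1}), and Taylor-expand the \emph{fixed} holomorphic self-map $\varphi_{T,t}$ of $\B^n$ around $\varphi_{s,T}(z)$, controlling the remainder by Cauchy estimates and the $O(\eps)$ bound of Lemma \ref{lem:1}; the chain rule for $\varphi_{s,t}=\varphi_{T,t}\circ\varphi_{s,T}$ (with $d(\varphi_{s,T})_z$ invertible by univalence of the evolution family) then produces exactly the prefactor $d(\varphi_{s,t})_z\cdot[d(\varphi_{s,T})_z]^{-1}$. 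This buys you something: the error is automatically $O(\eps^2)+o(\eps)$ uniformly on compacts, so neither the cited ODE differentiability theorem (which needs Carath\'eodory-type hypotheses, since $G$ is only measurable in $t$) nor the final Vitali step is required. What the paper's variational-equation formulation buys in return is the machinery that is actually reused afterwards: the linear matrix ODE for $Y^{\eps}_{s,t}$ in the proof of Theorem \ref{thm:var2} is what makes the case $t=\infty$ work, and your Taylor argument with a fixed $t$ does not directly extend to that limit. For Theorem \ref{thm:main} as stated (fixed finite $t$), your route is complete and, if anything, more elementary.
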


In order to prove Theorem \ref{thm:main}
we are going to adapt the standard method of 
\textit{needle or spike variations} for the particular
case of the Loewner equation (\ref{eq:L2}).

\begin{definition}[Needle variations]
Let $G(z,t)$ be a Herglotz vector field in the class $\M$, $h \in \M$ and $T>0$.
For each $\eps \in (0,T)$ let 
$$ G_{\eps}(\cdot,t):=G_{\eps,h,T}(\cdot,t):=\begin{cases} 
G(\cdot,t) & \text{ if } \quad t \in \R^+ \backslash (T-\eps,T)\, ,\\
h   & \text{ if } \quad t \in (T-\eps,T) \, .
\end{cases} $$

\begin{figure}[h]
\centerline{\includegraphics[width=7cm]{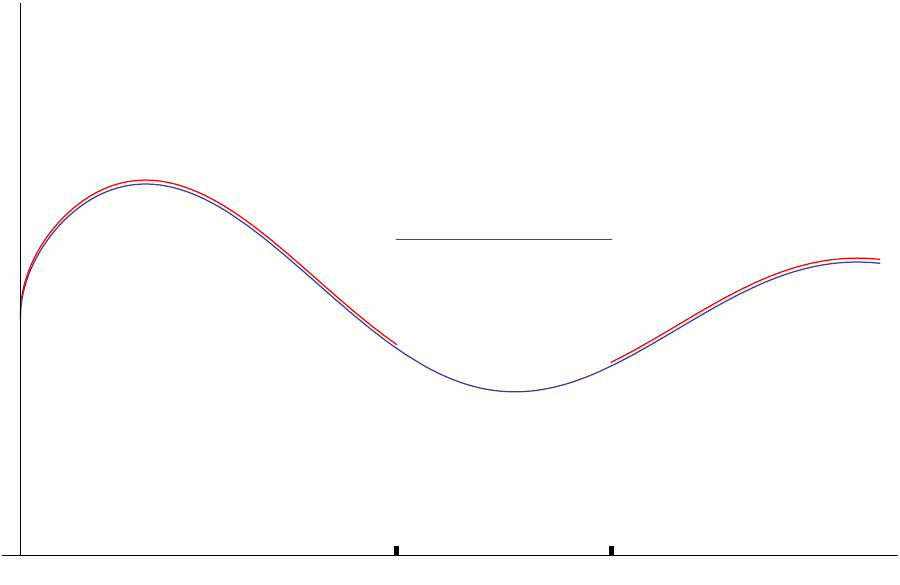}}

\vspace*{-0.4cm} \hspace{6.7cm} $T-\eps$ 
\hspace{0.75cm} $T$

\vspace*{-5.5cm} \hspace{4cm} $\M$ 

\vspace*{4.8cm}
\caption{The graphs of $G(\cdot,t)$ (in blue) and $G_{\eps}(\cdot,t)$ (in red).}
\end{figure}

We call  the Herglotz vector fields $G_{\eps}(z,t)$ in the class $\M$
the \textit{needle variations of $G(z,t)$ with data $(T,h)$}. 
We  also call the evolution families $(\varphi^{\eps}_{s,t}):=(\varphi^{G_{\eps}}_{s,t})$ the \textit{needle
variations of the evolution family $(\varphi^{G}_{s,t})$ with data $(T,h)$}. 
\end{definition}

\begin{remark} \label{rem:s}
Let $G(z,t)$ be a Herglotz vector field in the class $\M$, $h \in \M$ and $T>0$.
Since $G_{\eps}(\cdot,t)=G(\cdot,t)$ for any $t \not \in (T-\eps,T)$, we
immediately get that
$$ \varphi^{\eps}_{s,t}=\varphi_{s,t} \quad \text{ if } \, s \le t \le T-\eps \text{ or
} T \le s \le t \, .$$
In particular, we have
\begin{equation} \label{eq:lolo}
\varphi^{\eps}_{s,t}(z)=\varphi^{\eps}_{s,T-\eps}(z)+\int \limits_{T-\eps}^t
G_{\eps}(\varphi^{\eps}_{s,\tau}(z),\tau) \, d\tau=
\varphi_{s,T-\eps}(z)+\int \limits_{T-\eps}^t
h\left(\varphi^{\eps}_{s,\tau}(z) \right) \, d\tau 
\end{equation}
if $s \le T-\eps \le t \le T$.
\end{remark}

In what follows we use the notation $\overline{\B}_r^n:=\{z \in \C^n \, : \,
  ||z|| \le r\}$. 

\begin{lemma}[Convergence of needle variations] \label{lem:1}
Let $G(z,t)$ be a Herglotz vector field in the class $\M$, $h \in \M$ and $T>0$. Denote by $(\varphi^{\eps}_{s,t})$ 
the needle variations of $(\varphi_{s,t}):=(\varphi^G_{s,t})$ with data $(T,h)$.
Then
for fixed $s \ge 0$, we have
$$\lim \limits_{\eps \to 0+} \varphi_{s,t}^{\eps}(z) = \varphi_{s,t}(z)$$
uniformly for  $(z,t) \in \overline{\B}^n_r \times [s,\infty)$ for any $r \in
(0,1)$. 
\end{lemma}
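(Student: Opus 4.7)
The plan is to split the interval $[s,\infty)$ at the points $T-\eps$ and $T$, and to use three structural features of Loewner flows on $\B^n$. First, by Remark \ref{rem:s}, $\varphi^{\eps}_{s,t} = \varphi_{s,t}$ exactly for $s \le t \le T-\eps$, so there is nothing to prove there. Second, the compactness of $\M$ inside $\HB$ yields a finite uniform bound $M_r := \sup_{g \in \M}\sup_{||z|| \le r} ||g(z)||$ for every $r \in (0,1)$; and since $G_{\eps}(\cdot,t), G(\cdot,t) \in \M$ for a.e.~$t$, the sign condition $\Re\langle g(z), z\rangle \le 0$ on $\M$ gives $\tfrac{d}{dt}||\varphi^{\eps}_{s,t}(z)||^2 = 2\Re \langle G_{\eps}(\varphi^{\eps}_{s,t}(z),t), \varphi^{\eps}_{s,t}(z)\rangle \le 0$ (and analogously for $\varphi_{s,t}$), so both trajectories stay inside $\overline{\B}^n_r$ whenever $z \in \overline{\B}^n_r$. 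Third, for every $t \ge T$ the map $\varphi_{T,t}$ is a holomorphic self-map of $\B^n$ fixing the origin, hence the Schwarz lemma on $\B^n$ combined with Cauchy's estimate on $\overline{\B}^n_{(1+r)/2}$ shows that its restriction to $\overline{\B}^n_r$ is Lipschitz with a constant $L_r$ depending only on $r$ and \emph{not} on $t$.

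On the middle interval $t \in [T-\eps, T]$, the representation (\ref{eq:lolo}) together with the coincidence $\varphi^{\eps}_{s, T-\eps} = \varphi_{s, T-\eps}$ yields
\[
||\varphi^{\eps}_{s,t}(z) - \varphi_{s,t}(z)|| \le \int_{T-\eps}^{t} \bigl(||h(\varphi^{\eps}_{s,\tau}(z))|| + ||G(\varphi_{s,\tau}(z), \tau)||\bigr)\, d\tau \le 2 M_r \eps,
\]
uniformly for $z \in \overline{\B}^n_r$ and $t \in [T-\eps, T]$; here the invariance of $\overline{\B}^n_r$ from the second feature is what makes the uniform $\M$-bound $M_r$ applicable to the integrand.

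For $t \ge T$ the cocycle property of the evolution family gives $\varphi^{\eps}_{s,t} = \varphi_{T,t} \circ \varphi^{\eps}_{s,T}$ and $\varphi_{s,t} = \varphi_{T,t} \circ \varphi_{s,T}$, whence the third feature combined with the previous step yields
\[
||\varphi^{\eps}_{s,t}(z) - \varphi_{s,t}(z)|| \le L_r \, ||\varphi^{\eps}_{s,T}(z) - \varphi_{s,T}(z)|| \le 2 L_r M_r \eps,
\]
uniformly for $z \in \overline{\B}^n_r$ and $t \ge T$. Letting $\eps \to 0+$ completes the proof.

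The main obstacle is to obtain genuine uniformity in $t$ on the unbounded tail $[T, \infty)$, which is precisely what the Schwarz-lemma argument for $\varphi_{T,t}$ supplies. A direct Gronwall-type estimate applied to the ODE (\ref{eq:L2}) would produce a Lipschitz constant growing exponentially in $t$, and hence would only deliver uniform convergence on compact time intervals; the trick of writing the tail flow as composition with the self-map $\varphi_{T,t}$ of $\B^n$ fixing the origin circumvents this by replacing the integrated derivative of $G$ with the $t$-independent bound $L_r$.
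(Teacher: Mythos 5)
Your proof is correct and follows essentially the same route as the paper: coincidence of the flows up to $T-\eps$, an $O(\eps)$ estimate on $[T-\eps,T]$, and propagation to all $t\ge T$ via the composition $\varphi_{T,t}\circ\varphi^{\eps}_{s,T}$ with a Lipschitz constant for $\varphi_{T,t}$ on $\overline{\B}^n_r$ that is independent of $t$. Your middle-interval bound $2M_r\eps$ by the uniform sup over $\M$ is a slight simplification of the paper's Lipschitz-plus-Gronwall estimate, and your Cauchy-estimate argument for the $t$-uniform Lipschitz constant makes explicit what the paper only asserts.
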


\begin{proof} In view of Remark \ref{rem:s},
we may assume $s<T$. Fix $r \in (0,1)$. Since $\M$ is compact,
  there is a constant $L_r>0$ such that $||g(z)-g(z')|| \le L_r ||z-z'||$ for any $g \in
  \M$ and every $z,z'\in \overline{\B}^n_r$, see \cite[p.~298]{GK}. For every $t \in[s,T]$ we have $||\varphi_{s,t}(z)||
  \le ||z||$ and therefore we get from the identity (\ref{eq:lolo})  and the fact that
  $\varphi_{s,t}$ is a solution to the evolution equation (\ref{eq:L2}) the
  following estimate
\begin{eqnarray*}
||\varphi^{\eps}_{s,t}(z)-\varphi_{s,t}(z)|| & =  &\\
&&  \hspace*{-3cm}
=\left| \left| \varphi_{s,T-\eps}(z)+\int
\limits_{T-\eps}^t
h(\varphi^{\eps}_{s,\tau}(z)) \,d\tau -\varphi_{s,T-\eps}(z)- \int
\limits_{T-\eps}^t G(\varphi_{s,\tau}(z),\tau) \, d\tau \right|\right| \\
&&  \hspace*{-3cm} =
\left|\left| \, \int
\limits_{T-\eps}^t
h(\varphi^{\eps}_{s,\tau}(z))-G(\varphi_{s,\tau}(z),\tau) \, d\tau \right|\right|\\
& & \hspace*{-3cm} \le \int
\limits_{T-\eps}^t||
h(\varphi^{\eps}_{s,\tau}(z))-G(\varphi^{\eps}_{s,\tau}(z),\tau)|| \, d\tau+
 \int
\limits_{T-\eps}^t||
G(\varphi^{\eps}_{s,\tau}(z),\tau)-G(\varphi_{s,\tau}(z),\tau)|| \, d\tau\\
& & \hspace*{-3cm} \le 2 L_r || z|| (t-T+\eps)+L_r \int \limits_{T-\eps}^t ||
\varphi^{\eps}_{s,\tau}(z)-\varphi_{s,\tau}(z)|| \, d\tau \, .
 \end{eqnarray*}
Using the well--known Gronwall lemma (see \cite[p.~198]{FR}), this implicit
estimate for $||\varphi^{\eps}_{s,t}(z)-\varphi_{s,t}(z)||$ leads to the
explicit estimate
\begin{equation} \label{eq:4} ||\varphi^{\eps}_{s,t}(z)-\varphi_{s,t}(z)|| \le
2 L_r \eps ||z|| \, \left(1+ L_r \eps e^{L_r \eps}\right) \quad \text{ for
  every } t \in [s,T] \, . \end{equation}
In view of the semigroup property $\varphi^{\eps}_{T,t} \circ
\varphi^{\eps}_{s,T}=\varphi^{\eps}_{s,t}$,  we therefore get for all $t > T$,
\begin{equation} \label{eq:5}
 ||\varphi^{\eps}_{s,t}(z)-\varphi_{s,t}(z)||=||
\varphi_{T,t}(\varphi^{\eps}_{s,T}(z))-\varphi_{T,t}(\varphi_{s,T}(z))||\le
C_r ||\varphi^{\eps}_{s,T}(z)-\varphi_{s,T}(z)|| \, ,
\end{equation}
where $C_r>0$ is a constant such that $||\varphi_{s,t}(z)-\varphi_{s,t}(z')||\le C_r
||z-z'||$ for all $t \ge s$ and all $z,z' \in \overline{\B}^n_r$. 
If we combine (\ref{eq:5}) with (\ref{eq:4}), we finally have
\begin{equation} \label{eq:6}
||\varphi^{\eps}_{s,t}(z)-\varphi_{s,t}(z)|| \le \gamma_r \eps \quad \text{ for
  all } ||z|| \le r \text{ and all }  t \ge s \, ,
\end{equation}
where $\gamma_r$ depends only on $r$. This completes the proof of Lemma \ref{lem:1}.
\end{proof}

Lemma \ref{lem:1} says that the needle variations $(\varphi^{\eps}_{s,t})$ of 
$(\varphi_{s,t})$ with data $(T,h)$ form a ``continuous deformation''
of the evolution family $(\varphi_{s,t})$. If $T \in R_G$ is in addition a
\textit{regular} point of $G(z,t)$, then this deformation is actually ``differentiable'' in the
following sense.

\begin{theorem} \label{thm:varpar}
Let $G(z,t)$ be a Herglotz vector field in the class $\M$, let $T \in
R_G$ and $h \in \M$. For fixed $s \in [0,T]$ denote by $\varphi^{\eps}_{s,t}$ 
the needle variations of $\varphi_{s,t}:=\varphi^G_{s,t}$ with data $(T,h)$. Then
$$ \varphi^{\eps}_{s,t}=\varphi_{s,t}+\eps \, d\left(\varphi_{s,t}\right)_z \cdot \left[
  d\left(\varphi_{s,T}\right)_z \right]^{-1} \cdot \big[ h(\varphi_{s,T})-G(\varphi_{s,T},T)
\big] +o^{\eps}_{s,t} \, . $$
for any $t \ge T$. Here, 
 $o^{\eps}_{s,t}$ indicates a term, which divided by $\eps$, tends to $0$
 locally uniformly in $\B^n$ for each fixed $t \ge  T$ as $\eps\to 0+$. 
\end{theorem}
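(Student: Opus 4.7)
The plan is to exploit the semigroup structure: since $G_\eps = G$ outside the interval $(T-\eps,T)$, the needle variation $\varphi^\eps_{s,t}$ equals the unperturbed flow on $[s,T-\eps]$, is rerouted during $[T-\eps,T]$, and then transported forward by the original $\varphi_{T,t}$ for $t\ge T$. So I would first compute the first-order effect accumulated during the short window $[T-\eps,T]$, and then push it forward through the (smooth) map $\varphi_{T,t}$ via a Taylor expansion.

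For the first step, I start from the integral identity \eqref{eq:lolo} together with the analogous identity for the unperturbed family, and subtract to obtain
$$\varphi^\eps_{s,T}(z)-\varphi_{s,T}(z)=\int_{T-\eps}^T h(\varphi^\eps_{s,\tau}(z))\,d\tau-\int_{T-\eps}^T G(\varphi_{s,\tau}(z),\tau)\,d\tau.$$
Rewrite the first integral as $\eps\, h(\varphi_{s,T}(z))$ plus a remainder; that remainder is $o(\eps)$ locally uniformly in $z$, because $h$ is Lipschitz on any ball $\overline{\B}^n_r$ (the class $\M$ is compact, hence equi-Lipschitz on compact subsets, as used in Lemma \ref{lem:1}), while $\|\varphi^\eps_{s,\tau}(z)-\varphi_{s,T}(z)\|\to 0$ uniformly for $\tau\in[T-\eps,T]$ by Lemma \ref{lem:1} combined with absolute continuity of $\tau\mapsto\varphi_{s,\tau}(z)$. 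For the second integral, the assumption $T\in R_G$ together with Remark \ref{rem:lebesgue} gives
$$\frac{1}{\eps}\int_{T-\eps}^T G(\varphi_{s,\tau}(z),\tau)\,d\tau\;\longrightarrow\; G(\varphi_{s,T}(z),T)$$
locally uniformly on $\B^n$. Putting the two pieces together yields
$$\varphi^\eps_{s,T}(z)-\varphi_{s,T}(z)=\eps\bigl[h(\varphi_{s,T}(z))-G(\varphi_{s,T}(z),T)\bigr]+o^\eps_{s,T}(z).$$

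For the second step, fix $t\ge T$. Since $G_\eps$ agrees with $G$ on $[T,\infty)$, the semigroup property of the evolution family gives $\varphi^\eps_{s,t}=\varphi_{T,t}\circ\varphi^\eps_{s,T}$ and $\varphi_{s,t}=\varphi_{T,t}\circ\varphi_{s,T}$. Writing $\Delta^\eps(z):=\varphi^\eps_{s,T}(z)-\varphi_{s,T}(z)$ and Taylor-expanding the holomorphic map $\varphi_{T,t}$ at the point $\varphi_{s,T}(z)$, I obtain
$$\varphi^\eps_{s,t}(z)-\varphi_{s,t}(z)=d(\varphi_{T,t})_{\varphi_{s,T}(z)}\bigl[\Delta^\eps(z)\bigr]+R^\eps(z),$$
with $\|R^\eps(z)\|=O(\|\Delta^\eps(z)\|^2)=O(\eps^2)$ locally uniformly. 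Inserting the first-step expansion of $\Delta^\eps(z)$ produces the desired linear term with $d(\varphi_{T,t})_{\varphi_{s,T}(z)}$ in front of $h(\varphi_{s,T})-G(\varphi_{s,T},T)$. To recognize this as the expression in the theorem, I differentiate the identity $\varphi_{s,t}=\varphi_{T,t}\circ\varphi_{s,T}$ by the chain rule, obtaining $d(\varphi_{s,t})_z=d(\varphi_{T,t})_{\varphi_{s,T}(z)}\cdot d(\varphi_{s,T})_z$; since each $\varphi_{s,T}$ is univalent, $d(\varphi_{s,T})_z$ is invertible, and so $d(\varphi_{T,t})_{\varphi_{s,T}(z)}=d(\varphi_{s,t})_z\cdot[d(\varphi_{s,T})_z]^{-1}$, which matches the form stated.

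The main obstacle is bookkeeping the error terms \emph{locally uniformly} in $z$ rather than merely pointwise. Two places need care: (a) the estimate $h(\varphi^\eps_{s,\tau}(z))-h(\varphi_{s,T}(z))=o(1)$ must be uniform in $\tau\in[T-\eps,T]$ and in $z$ on compact subsets of $\B^n$, which I handle using the equi-Lipschitz property of $\M$ on $\overline{\B}^n_r$ plus Lemma \ref{lem:1}; and (b) the convergence in \eqref{eq:lebesgue} at a regular point is only stated locally uniformly in $z$, but that is precisely what is needed since one can apply it along the curve $z\mapsto\varphi_{s,T}(z)$, noting that $\varphi_{s,T}(\overline{\B}^n_r)$ is compact in $\B^n$. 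Once these locally uniform $o(\eps)$ bounds are in hand, the combination with the Lipschitz continuity of $d(\varphi_{T,t})$ on compacta completes the argument.
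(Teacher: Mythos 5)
Your proposal is correct, and its first step (the expansion at $t=T$ via the identity (\ref{eq:lolo}), the equi-Lipschitz property of the compact class $\M$, Lemma \ref{lem:1}, and Remark \ref{rem:lebesgue}) coincides with the paper's. Where you genuinely diverge is in propagating the perturbation from time $T$ to a general $t\ge T$: the paper regards $\varphi^{\eps}_{s,t}$ as the solution of the Loewner ODE for $t\ge T$ with perturbed initial data, invokes the standard theorem on differentiability of ODE solutions with respect to initial conditions, identifies the derivative $\psi_t$ as the solution of the linearized equation $\dot{\psi}_t=\frac{\partial G}{\partial z}(\varphi_{s,t},t)\psi_t$, matches it by uniqueness with $d(\varphi_{s,t})_z\left[d(\varphi_{s,T})_z\right]^{-1}\left[h(\varphi_{s,T})-G(\varphi_{s,T},T)\right]$, and then upgrades the resulting pointwise convergence of $o^{\eps}_{s,t}/\eps$ to locally uniform convergence by a Gronwall estimate plus Vitali's theorem. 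You instead exploit that $G_{\eps}=G$ on $[T,\infty)$, so $\varphi^{\eps}_{s,t}=\varphi_{T,t}\circ\varphi^{\eps}_{s,T}$, and Taylor-expand the fixed holomorphic map $\varphi_{T,t}$ at $\varphi_{s,T}(z)$, identifying $d(\varphi_{T,t})_{\varphi_{s,T}(z)}=d(\varphi_{s,t})_z\left[d(\varphi_{s,T})_z\right]^{-1}$ by the chain rule and the univalence of $\varphi_{s,T}$. This route is more elementary: it avoids the citation to differentiability in initial conditions and the final Vitali step, since the quadratic Taylor remainder is controlled locally uniformly (and even uniformly in $t$) by Cauchy estimates on $\overline{\B}^n_r$ together with the bound $\Vert\Delta^{\eps}\Vert=O(\eps)$ from (\ref{eq:4}), and the segment joining $\varphi_{s,T}(z)$ and $\varphi^{\eps}_{s,T}(z)$ stays in $\overline{\B}^n_r$ because both maps decrease the norm. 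What the paper's variational-equation method buys in exchange is that it does not depend on the perturbation being confined to $[T-\eps,T]$ with the field unchanged afterwards, and it sets up exactly the linear-ODE machinery (the matrix equation for $Y^{\eps}_{s,t}$) reused in the proof of Theorem \ref{thm:var2} for $t=\infty$. One cosmetic remark: your point (b) about applying (\ref{eq:lebesgue}) ``along the curve $z\mapsto\varphi_{s,T}(z)$'' is slightly imprecise as stated, since the integrand is $G(\varphi_{s,\tau}(z),\tau)$ with moving argument; but the comparison with $G(\varphi_{s,T}(z),\tau)$ costs only $O(\eps)$ by equi-Lipschitzness, and in any case Remark \ref{rem:lebesgue}, which you cite, is exactly the statement you need.
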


\begin{proof} 
Using (\ref{eq:lolo}), we have
\begin{eqnarray*}
 \frac{\varphi^{\eps}_{s,T}(z)-\varphi_{s,T}(z)}{\eps}
&=&  \frac{\varphi^{\eps}_{s,T}(z)-\varphi_{s,T-\eps}(z)}{\eps}- \frac{\varphi_{s,T}(z)-\varphi_{s,T-\eps}(z)}{\eps}\\
&=&
\frac{1}{\eps} \int  \limits_{T-\eps}^T
h(\varphi^{\eps}_{s,\tau}(z)) \, d\tau-
\frac{1}{\eps} \int \limits_{T-\eps}^T
G(\varphi_{s,\tau}(z),\tau) \, d\tau \, .
\end{eqnarray*}

Since $T \in R_G$, we therefore see by using Remark \ref{rem:lebesgue} and
Lemma \ref{lem:1} that
$$ \frac{\partial^+ \varphi^{\eps}_{s,T}(z)}{\partial \eps} \bigg|_{\eps=0} :=\lim
\limits_{\eps \to 0+}
\frac{\varphi^{\eps}_{s,T}(z)-\varphi_{s,T}(z)}{\eps}=h(\varphi_{s,T}(z))-G(\varphi_{s,T}(z),T)
\, ,
$$
where the limit exists locally uniformly in $\B^n$. This proves the claim for
$t=T$. We can now handle the general case $t \ge T$.
By what we have just proved, we know that
$\varphi_{s,t}^{\eps}$ is a solution to 
\begin{equation} \label{eq:var1} \begin{array}{rcl}
 \dot{\varphi}^{\eps}_{s,t}(z) &=& G(\varphi_{s,t}^{\eps}(z),t) \, , \qquad t \ge T \, \\[2mm]
\varphi^{\eps}_{s,T}(z)&=&\varphi_{s,T}(z)+\eps \big[h(\varphi_{s,T}(z))-G(\varphi_{s,T}(z),T)\big]+r_{\eps}(z)
\, ,
\end{array}
\end{equation}
where $r_{\eps} \in \HB$ such that $r_{\eps}/\eps \to 0$ locally
uniformly in $\B^n$ as $\eps\to0+$. We now make use of a standard result
from ODE--theory about ``differentiability with respect to initial conditions''
and differentiate  (\ref{eq:var1}) with
respect to $\eps$, see \cite[Theorem 1A, p.~57]{LM}.
 This way, we find that $$\psi_t(z):=\frac{\partial^+
  \varphi^{\eps}_{s,t}(z)}{\partial \eps} \bigg|_{\eps=0}$$ is a solution to
the 
initial value problem
\begin{equation} \label{eq:var2} \begin{array}{rcl}
 \dot{\psi}_t(z)&=& \displaystyle \frac{\partial G}{\partial z}(\varphi_{s,t}(z),t) \cdot \psi_t(z) \, , \qquad t \ge T \, , \\[3mm]
\psi_T(z)&=& h(\varphi_{s,T}(z))-G(\varphi_{s,T}(z),T) \, .
\end{array}
\end{equation}
On the other hand, by differentiating the evolution equation (\ref{eq:L2}) with
respect to $z$, it is easy to see that
$$ t\mapsto d\left(\varphi_{s,t}\right)_z \cdot \left[d\left(\varphi_{s,T}\right)_z\right]^{-1} \cdot \big[h(\varphi_{s,T}(z))-G(\varphi_{s,T}(z),T)\big]$$
is also a solution to (\ref{eq:var2}). By uniqueness, we deduce that for
every $t \ge T$
\begin{eqnarray*}
 \frac{\partial^+
  \varphi^{\eps}_{s,t}(z)}{\partial \eps} \bigg|_{\eps=0}
= d\left(\varphi_{s,t}\right)_z
\cdot \left[d\left(\varphi_{s,T}\right)_z\right]^{-1} \cdot
\big[ h(\varphi_{s,T}(z))-G(\varphi_{s,T}(z),T)\big] \,.
\end{eqnarray*}
We have hence shown that for fixed $t \ge T$,
$$ \varphi^{\eps}_{s,t}(z)=\varphi_{s,t}(z)+\eps \, d(\varphi_{s,t})_z \cdot \left[
  d(\varphi_{s,T})_z \right]^{-1} \cdot \big[ h(\varphi_{s,T}(z))-G(\varphi_{s,T}(z),T)
\big] +o^{\eps}_{s,t}(z) \, , $$
where   
$$ \lim \limits_{\eps \to 0+} \frac{o^{\eps}_{s,t}(z)}{\eps}=0 \qquad
 \text{ for every }z \in \B^n \, . $$
It is not difficult to prove that this limit actually exists locally uniformly w.r.t.~$z \in
\B^n$. In fact, note that  for fixed $0<r<1$, 
$\varphi^{\eps}_{s,t}(z) \in \overline{\B}^n_r$ for every $z \in
\overline{\B}^n_r$ and all $0 \le s \le t$. Again using the compactness of $\M$,
we see that there is a constant $L_r>0$ such that $||g(z)-g(z')|| \le L_r ||z-z'||$ for
all $g \in \M$ and all $z,z' \in  \overline{\B}^n_r$. Therefore,
\begin{eqnarray*}
|| \varphi^{\eps}_{s,t}(z)-\varphi_{s,t}(z)|| & = & \left|\left|
\varphi^{\eps}_{s,T}(z)-\varphi_{s,T}(z)+ \int \limits_{T}^t
\left( G(\varphi^{\eps}_{s,\tau}(z),\tau)-G(\varphi_{s,\tau}(z),\tau) \right) \, d\tau
\right|\right| \\
& \le & ||\varphi^{\eps}_{s,T}(z)-\varphi_{s,T}(z)||+L_r 
\int \limits_{T}^t || \varphi^{\eps}_{s,\tau}(z)-\varphi_{s,\tau}(z)|| \,
d\tau \, .
\end{eqnarray*}
Now, Gronwall's lemma implies that
\begin{eqnarray*}
 \left|\left|
     \frac{\varphi^{\eps}_{s,t}(z)-\varphi_{s,t}(z)}{\eps}\right|\right| &\le &
\left|\left|\frac{\varphi^{\eps}_{s,T}(z)-\varphi_{s,T}(z)}{\eps}\right|\right|
e^{L_r(t-T)} \\[2mm]
&=& \left|\left| h(\varphi_{s,T}(z))-G(\varphi_{s,T}(z),T) +\frac{r_{\eps}(z)}{\eps}\right|\right|
\, e^{L_r(t-T)} \, .
\end{eqnarray*}
Hence, $o^{\eps}_{s,t}(z)/\eps$ is uniformly bounded on $\overline{\B}^n_r$
as $\eps \to 0+$. Since we have already proved that $o^{\eps}_{s,t}(z)/\eps
\to 0$ pointwise in $\B^n$, Vitali's theorem  shows that actually
$o^{\eps}_{s,t}/\eps\to 0$ locally uniformly in $\B^n$.
\end{proof}

\hide{\begin{remark}
The above proof works for every $s \ge 0$ and every $T>s$ such that the ``left
derivative''
$$ \frac{\partial^- \varphi_{s,t}(z)}{\partial t}\bigg|_{t=T}:=\lim
\limits_{\eps \to 0+} \frac{\varphi_{s,T}(z)-\varphi_{s,T-\eps}(z)}{\eps}$$
exists locally uniformly w.r.t.~$z \in \B^n$.
\end{remark}}

\hide{

\begin{proposition}
Let $G(z,t)$ be a Herglotz vector field in the class $\M$,
let $(\varphi_{s,t}):=(\varphi_{s,t}^G)$  and let $T \ge 0$ be fixed. 
Let
$$ G^{-}_{\eps}(\cdot,t):=\begin{cases}
G(\cdot,t) & \text{if } t \in \R^+_0 \backslash (T-\eps,T) \\
G(\cdot,T) & \text{if } t \in (T-\eps,T) \, ,
\end{cases} $$

$$
 G^{+}_{\eps}(\cdot,t):=\begin{cases}
G(\cdot,t) & \text{if } t \in \R^+_0 \backslash (T,T+\eps) \\
G(\cdot,T) & \text{if } t \in (T,T+\eps)
\end{cases}
$$
and let $\varphi^{\eps,-}_{s,t}:=\varphi_{s,t}^{G^-_{\eps}}$,
$\varphi^{\eps,+}_{s,t}:=\varphi_{s,t}^{G^+_{\eps}}$. If $s<T$, then 
$$ \varphi^{\eps,\pm}_{s,T\pm \eps}(z)=\varphi_{s,T}(z) \pm \eps
G(\varphi_{s,T}(z),T)+r_{\eps}(z) \, ,$$
Here $r_{\eps}/\eps \to 0$ locally uniformly in $\B^n$ as $\eps \to 0+$.
\end{proposition}

\begin{proof} \textbf{To do}
\end{proof}}

\section{Variations in $\mathcal{S}_n^0$ and variations for parametric
  representations} \label{sec:3}

By definition,  every $f \in \mathcal{S}_n^0$ has the form $$f=\lim \limits_{t \to \infty} e^t\varphi^G_{0,t}$$
for some Herglotz vector field $G(z,t)$ in the class $\M$. Therefore
the following result for $s=0$ and $t=\infty$ is exactly the statement of
Theorem \ref{thm:main1} and provides us with a variational formula in the class $\mathcal{S}_n^0$.

\begin{theorem} \label{thm:var2}
Let $G(z,t)$ be a Herglotz vector field in the class $\M$, let $T \in R_G$ and $h \in \M$.
For fixed $s \in [0,T]$ consider the needle variations
$(\varphi^{\eps}_{s,t})$ of $(\varphi_{s,t}):=(\varphi^G_{s,t})$ with data $(T,h)$.
Then
\begin{equation} \label{eq:varI}
 e^t\varphi^{\eps}_{s,t}=e^t\varphi_{s,t}+\eps \, d(e^t\varphi_{s,t})_z
\cdot \left[
  d(e^T\varphi_{s,T})_z \right]^{-1} \cdot e^T \big[ h(\varphi_{s,T})-G(\varphi_{s,T},T)
\big] +r^{\eps}_{s,t} \,  
\end{equation}
for any $t \in [T,\infty]$. Here, 
 the error term $r^{\eps}_{s,t} \in \HB$ has the property that
$r^{\eps}_{s,t}/\eps \to 0$  locally uniformly in $\B^n$ for every
fixed $t \in [T,\infty]$ as $\eps \to 0+$. 
\end{theorem}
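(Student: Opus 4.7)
\textit{Plan.} For any finite $t\in[T,\infty)$ the identity (\ref{eq:varI}) is obtained from Theorem \ref{thm:varpar} by multiplying through by the constant factor $e^t$, after observing the algebraic identity
$$d(e^t\varphi_{s,t})_z\cdot\bigl[d(e^T\varphi_{s,T})_z\bigr]^{-1}\cdot e^T\;=\;e^t\,d(\varphi_{s,t})_z\cdot\bigl[d(\varphi_{s,T})_z\bigr]^{-1}.$$
The corresponding error term $r^\eps_{s,t}=e^t o^\eps_{s,t}$ inherits the required locally uniform convergence $r^\eps_{s,t}/\eps\to 0$ on compact subsets of $\B^n$. The genuine content of Theorem \ref{thm:var2} is therefore the case $t=\infty$, and my plan is to reduce it to Theorem \ref{thm:varpar} via a limit argument.

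To make the limit $t\to\infty$ effective I would exploit the semigroup structure. Because $G_\eps(\cdot,\tau)=G(\cdot,\tau)$ for every $\tau\ge T$, uniqueness for (\ref{eq:L2}) gives the factorization
$$\varphi^\eps_{s,t}=\varphi^G_{T,t}\circ\varphi^\eps_{s,T}\qquad(t\ge T),$$
and hence $e^t\varphi^\eps_{s,t}=\bigl(e^t\varphi^G_{T,t}\bigr)\circ\varphi^\eps_{s,T}$. A time shift $\tilde G(z,u):=G(z,u+T)$ produces a Herglotz vector field in $\M$ with $\varphi^{\tilde G}_{0,u}=\varphi^G_{T,u+T}$, so the limit
$$F_T(w):=\lim_{t\to\infty}e^t\,\varphi^G_{T,t}(w)=e^T f^{\tilde G}(w)$$
exists locally uniformly on $\B^n$. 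Consequently $\lim_{t\to\infty}e^t\varphi^\eps_{s,t}=F_T\circ\varphi^\eps_{s,T}$ and $\lim_{t\to\infty}e^t\varphi_{s,t}=F_T\circ\varphi_{s,T}$, while Weierstra{\ss} yields $d(e^t\varphi_{s,t})_z\to d(F_T\circ\varphi_{s,T})_z$ locally uniformly. Letting $t\to\infty$ in the finite-$t$ case of (\ref{eq:varI}) identifies the correct candidate
$$r^\eps_{s,\infty}:=F_T\!\circ\!\varphi^\eps_{s,T}-F_T\!\circ\!\varphi_{s,T}-\eps\,d(F_T\!\circ\!\varphi_{s,T})_z\cdot\bigl[d(\varphi_{s,T})_z\bigr]^{-1}\cdot\bigl[h(\varphi_{s,T})-G(\varphi_{s,T},T)\bigr],$$
and the chain rule $d(F_T)_{\varphi_{s,T}(z)}=d(F_T\circ\varphi_{s,T})_z\cdot[d(\varphi_{s,T})_z]^{-1}$ together with $[d(\varphi_{s,T})_z]^{-1}=e^T[d(e^T\varphi_{s,T})_z]^{-1}$ rewrites this in the exact form demanded by (\ref{eq:varI}) at $t=\infty$.

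It then remains to verify that $r^\eps_{s,\infty}/\eps\to 0$ locally uniformly on $\B^n$. Here I would argue as in the final paragraph of the proof of Theorem \ref{thm:varpar}: the already-established first-order expansion of $\varphi^\eps_{s,T}$ at $\eps=0$, combined with the holomorphic Taylor expansion of $F_T$ around $\varphi_{s,T}(z)$ (which stays in a fixed compact subset of $\B^n$ when $z$ ranges over a compact set, by the Schwarz-type bound $\|\varphi_{s,T}(z)\|\le\|z\|$), yields pointwise convergence of $r^\eps_{s,\infty}/\eps$ to $0$. Local boundedness of the holomorphic family $\{r^\eps_{s,\infty}/\eps\}_{\eps>0}$ follows from the compactness of $\M$, the Lipschitz estimate it provides on $G(\cdot,T)$ and $h$, and the uniform bound on $F_T$ on compact subsets; Vitali's theorem then upgrades pointwise to locally uniform convergence. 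The only delicate bookkeeping is propagating the uniform estimates through the outer composition with $F_T$; every other ingredient is already available from Theorem \ref{thm:varpar} and the standard convergence $e^t\varphi^G_{T,t}\to F_T$.
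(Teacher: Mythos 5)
Your argument is correct, but it follows a genuinely different route from the paper's proof. The paper treats the case $t=\infty$ by staying inside the ODE framework: it writes $v^{\eps}_{s,t}=e^t\varphi^{\eps}_{s,t}$, introduces the fundamental solution $Y^{\eps}_{s,t}$ of the linearized equation $\dot Y=E^{\eps}_{s,t}Y$, expresses the error term explicitly as $r^{\eps}_{s,t}=\eps e^T\bigl[Y^{\eps}_{s,t}-Y^0_{s,t}\bigr]\cdot\bigl[h(\varphi_{s,T})-G(\varphi_{s,T},T)\bigr]+Y^{\eps}_{s,t}\,r^{\eps}_{s,T}$, and then uses the decay $\|\varphi_{s,\tau}(z)\|\le C_r e^{-\tau}$ together with Gronwall to obtain bounds $\|Y^{\eps}_{s,t}-Y^0_{s,t}\|\le M_r\eps$, $\|Y^{\eps}_{s,t}\|\le M_r$ that are uniform in $t\ge T$, so the estimate survives the passage $t\to\infty$. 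You instead exploit the semigroup factorization $\varphi^{\eps}_{s,t}=\varphi_{T,t}\circ\varphi^{\eps}_{s,T}$ (valid since $G_{\eps}=G$ on $[T,\infty)$), pass to the fixed holomorphic limit map $F_T=\lim_{t\to\infty}e^t\varphi_{T,t}$ (existence via the time shift, i.e.\ essentially \cite[Thm.~8.1.5]{GK}), and reduce everything to the already proved expansion of $\varphi^{\eps}_{s,T}$ at $t=T$ plus the chain rule and a first-order Taylor/Cauchy estimate for $F_T$ on a compact ball; the normalization identity $[d(e^T\varphi_{s,T})_z]^{-1}e^T=[d(\varphi_{s,T})_z]^{-1}$ then puts the drift term in the stated form. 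This buys you a shorter and more elementary argument that avoids the uniform-in-$t$ Gronwall analysis entirely, at the price of invoking the existence of $f^G_T$ from standard Loewner theory (which in \cite{GK} rests on the same decay lemma the paper uses); the paper's route, by contrast, produces the quantitative bounds (\ref{eq:var13}) on the linearized flow as a by-product. One small streamlining of your last step: since $\|\varphi^{\eps}_{s,T}-\varphi_{s,T}\|\le\gamma_r\eps$ uniformly on $\overline{\B}^n_r$ by Lemma \ref{lem:1}, the quadratic Taylor remainder of $F_T$ (with both points in the convex set $\overline{\B}^n_r$ and Cauchy estimates on a slightly larger ball) already gives $r^{\eps}_{s,\infty}/\eps\to0$ uniformly on $\overline{\B}^n_r$ directly, so the pointwise-plus-Vitali detour is not needed.
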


\begin{remark} 
Note that Theorem \ref{thm:var2} holds in particular for $t=\infty$, where we have used the
convenient notation $$e^t \varphi^{\eps}_{s,t}:=\lim_{\tau \to \infty} e^{\tau}
\varphi^{\eps}_{s,\tau}\quad \text{ for } t=\infty \, .$$
In this case, we define the error term $r^{\eps}_{s,\infty}\in \HB$ as 
$$ r^{\eps}_{s,\infty}:=\lim \limits_{t \to \infty} r^{\eps}_{s,t} \, .$$
This limit clearly exists locally uniformly in $\B^n$ in view of (\ref{eq:varI}).
\end{remark}

\begin{proof}[Proof of Theorem \ref{thm:var2}]
Let $r^{\eps}_{s,t}$ be defined by (\ref{eq:varI}). We need to show that
$r^{\eps}_{s,t}/\eps \to 0$ locally uniformly in $\B^n$ for
every fixed $t \in [T,\infty]$  as $\eps \to 0+$.
The cases  $t<\infty$  follow directly from Theorem
\ref{thm:varpar}, so we only need to deal with the case $t=\infty$.

\medskip

(i) \, In order to handle the error term $r^{\eps}_{s,\infty}$  we first derive a convenient expression
for the error term $ r^{\eps}_{s,t}$ for all $0 \le s \le t<\infty$.
Let $v_{s,t}^{\eps}(z):=e^t\varphi^{\eps}_{s,t}(z)$ and $v^0_{s,t}(z):=e^t\varphi_{s,t}(z)$.
If we set $\tilde{G}(z,t):=z+e^t G(e^{-t} z,t)$, then 
\begin{equation} \label{eq:var10} \begin{array}{rcl}
 \dot{v}^{\eps}_{s,t}(z) &=& \tilde{G}(v^{\eps}_{s,t}(z),t) \, , \qquad t \ge T \, \\[2mm]
v^{\eps}_{s,T}(z)&=&v^0_{s,T}(z)+\eps e^T \big[h(\varphi_{s,T}(z))-G(\varphi_{s,T}(z),T)\big]+r^{\eps}_{s,T}(z)
\, ,
\end{array}
\end{equation}
where $r^{\eps}_{s,T}/\eps \to 0$  locally uniformly in
$\B^n$ as $\eps \to 0+$ by applying Theorem \ref{thm:varpar} for $t=T$.
For $t \ge T$ let 
$$ E^{\eps}_{s,t}(z):=\int \limits_{0}^1 \frac{\partial \tilde{G}}{\partial
    z}\left(v^0_{s,t}(z)+\alpha \left( v^{\eps}_{s,t}(z)-v^{0}_{s,t}(z) \right),t\right)
  \, d\alpha \, , \quad E^0_{s,t}(z)=\frac{\partial \tilde{G}}{\partial z}
  \left(v^0_{s,t}(z),t \right) \, ,$$
so the difference $\Psi^{\eps}_{s,t}(z):=v^{\eps}_{s,t}(z)-v^0_{s,t}(z)$ has the property
 \begin{equation} \label{eq:var11} \begin{array}{rcl}
 \dot{\Psi}^{\eps}_{s,t}(z) &=& E^{\eps}_{s,t}(z) \cdot \Psi^{\eps}_{s,t}(z) \, ,
 \qquad t \ge T \\[2mm]
\Psi^{\eps}_{s,T}(z)&=& \eps e^T \big[h(\varphi_{s,T}(z))-G(\varphi_{s,T}(z),T)\big]+r^{\eps}_{s,T}(z)
\, .
\end{array}
\end{equation}
In order to analyze the behaviour of $\Psi^{\eps}_{s,t}$ as $t \to \infty$, we
consider the linear matrix--ODE
\begin{equation} \label{eq:var12a} \begin{array}{rcl}
 \dot{Y}^{\eps}_{s,t}(z) &=&  E^{\eps}_{s,t}(z) \cdot Y^{\eps}_{s,t}(z) \, ,
 \qquad t \ge T \\[2mm]
Y^{\eps}_{s,T}(z)&=& I 
\, .
\end{array}
\end{equation}
The motivation for doing so comes from the observation that
 in view of (\ref{eq:var11}) we can write 
\begin{equation} \label{eq:varII}
\begin{array}{rcl}
 \Psi^{\eps}_{s,t}(z)&=&Y^{\eps}_{s,t}(z) \cdot \Psi^{\eps}_{s,T}(z) \\[2mm] &=& 
 Y^{\eps}_{s,t}(z) \cdot \left\{\eps e^T
   \big[h(\varphi_{s,T}(z))-G(\varphi_{s,T}(z),T)\big]+r^{\eps}_{s,T}(z)\right\} \, .
\end{array}
\end{equation}
In a similar way, since differentiating (\ref{eq:var10}) for $\eps=0$
w.r.t.~$z$ shows that
$$ \frac{d}{dt} \left[ d(v^0_{s,t})_z \right]=\frac{\partial
  \tilde{G}}{\partial z}\left( v^0_{s,t}(z),t \right) \cdot \left[
  d(v^0_{s,t})_z \right]=E^0_{s,t}(z) \cdot \left[
  d(v^0_{s,t})_z \right] \, , \qquad t \ge T \, ,$$
we get 
\begin{equation} \label{eq:varIII}
 d(v^0_{s,t})_z=Y^0_{s,t}(z) \cdot d (v^0_{s,T})_z \, , \qquad t \ge T \,
. \end{equation}
Now, formulas (\ref{eq:varII}) and (\ref{eq:varIII}) and 
the definition of the error term $r^{\eps}_{s,t}$ show that
\begin{equation} \label{eq:var12}
r^{\eps}_{s,t}(z)=
\eps e^T \left[ Y^{\eps}_{s,t}(z)-Y^0_{s,t}(z) \right] \cdot
\big[h(\varphi_{s,T}(z))-G(\varphi_{s,T}(z),T)\big]+Y^{\eps}_{s,t}(z)
r^{\eps}_{s,T}(z) \, .
\end{equation}
(ii) \, We now examine $Y^{\eps}_{s,t}$ with the help 
of the linear matrix--ODE (\ref{eq:var12a}) and show that for any $r \in (0,1)$
there exists a constant $M_r>0$ such that
\begin{equation} \label{eq:var13}
 || Y^{\eps}_{s,t}(z)-Y^0_{s,t}(z)|| \le M_r \eps \, \text{ and } \,
||Y^{\eps}_{s,t}(z)|| \le M_r \, \text{ for all } \, ||z|| \le r \, \text{ and
  all } \, t \ge
  T\, . \end{equation}
In view of (\ref{eq:var12}) this then implies $r^{\eps}_{s,\infty}(z)/\eps \to
0$ uniformly in $||z|| \le r$ as $\eps \to 0+$.
It therefore remains to prove (\ref{eq:var13}). We fix $r \in (0,1)$.
In the following, $C_r$ always denotes a constant, which depends only on $r$,
but the value of $C_r$ may be different at each occurence.
We first note 
$\left|\left|\id+d(h)_z\right|\right| \le C_r \cdot ||z||$  for all $||z|| \le r$ and all  $h \in \M$.
This follows from the compactness of $\M$ and the normalization $d(h)_0=-\id$.
Moreover, $||\varphi_{s,\tau}(z)|| \le C_r e^{-\tau}$ for all $\tau \ge T$ and
all $||z|| \le r$, see \cite[Lemma 8.1.4]{GK}.
Hence, from the definition of $E^{\eps}_{s,\tau}(z)$ we infer that
\begin{equation} \label{eq:var14}
\begin{array}{rcl}
||E^{\eps}_{s,\tau}(z)|| &\le& \displaystyle \int \limits_{0}^{1}
\left|\left|\,\id+\displaystyle\frac{\partial G}{\partial z}\left(\varphi_{s,\tau}(z)+\alpha \left(
      \varphi^{\eps}_{s,\tau}(z)-\varphi_{s,\tau}(z) \right) ,\tau\right)\right|\right|\,
d\alpha \\ 
 &\le & \displaystyle C_r \int \limits_{0}^{1} \left|\left| \varphi_{s,\tau}(z)+\alpha \left(
      \varphi^{\eps}_{s,\tau}(z)-\varphi_{s,\tau}(z) \right) \right|\right|\,d \alpha\\
&\le & C_r \, e^{-\tau} \, \text{ for all } ||z|| \le r \text{ and all } \tau \ge T
\,.
\end{array}
\end{equation}
In a similar way, we can deduce 
\begin{equation}
 \label{eq:var15}
\begin{array}{rcl}
||E^{\eps}_{s,\tau}(z)-E^0_{s,\tau}(z)|| &\le&  C_r
||\varphi^{\eps}_{s,\tau}(z)-\varphi_{s,\tau}(z)||\\[2mm] &=&  C_r
||\varphi_{T,\tau}(\varphi^{\eps}_{s,T}(z))-\varphi_{T,\tau}(\varphi_{s,T}(z))||\\[2mm]
& \le & C_r e^{-\tau} ||\varphi^{\eps}_{s,T}(z)-\varphi_{s,T}(z)||\\[2mm]
& \le & C_r  e^{-\tau} \eps 
 \, \text{ for all } ||z|| \le r \text{ and all } \tau \ge T
\, .
\end{array}
\end{equation}
The last estimate comes from (\ref{eq:6}).
We are now prepared to prove (\ref{eq:var13}).
Since $t \mapsto Y^{\eps}_{s,t}(z)$ is a solution to (\ref{eq:var12a}), we get
\begin{eqnarray*}
 ||Y^{\eps}_{s,t}(z)-Y^0_{s,t}(z)|| 
&=& \left| \left| \int \limits_{T}^t E^{\eps}_{s,\tau}(z) Y^{\eps}_{s,\tau}(z)
    \, d\tau-\int \limits_{T}^{\tau} E^0_{s,\tau}(z) V^0_{s,\tau}(z) \, d\tau
  \right| \right| \\
&\le&   \int \limits_{T}^t
||E^{\eps}_{s,\tau}(z)|| \cdot ||Y^{\eps}_{s,\tau}(z)-Y^0_{s,\tau}(z)|| \,
d\tau\\ & & +\int \limits_{T}^t ||E^{\eps}_{s,\tau}(z)-E^0_{s,\tau}(z)|| \,
||Y^0_{s,\tau}(z)|| \, d\tau \, . 
\end{eqnarray*}

Now (\ref{eq:varIII}) shows 
$$Y^0_{s,\tau}(z)=e^{\tau} d\left(\varphi_{s,\tau}\right)_z
\cdot \left[ d\left(e^T\varphi_{s,T}\right)_z \right]^{-1} \, ,$$
so the inequality $||\varphi_{s,\tau}(z)|| \le C_r e^{-\tau}$ for all $||z|| \le
r$ and all $\tau \ge T$, which leads to $||d(\varphi_{s,\tau})_z|| \le C_re^{-\tau}$, 
therefore implies that
 $||Y^0_{s,\tau}(z)|| \le C_r$ for all
$\tau \ge T$.  Hence, in combination with (\ref{eq:var14}) and (\ref{eq:var15}), 
we get the implicit estimate
\begin{eqnarray*}
 ||Y^{\eps}_{s,t}(z)-Y^0_{s,t}(z)|| \le   C_r \int \limits_{T}^t
e^{-\tau} ||Y^{\eps}_{s,\tau}(z)-Y^0_{s,\tau}(z)|| \,
d\tau +C_r \eps \int \limits_{T}^t e^{-\tau} \, d\tau \, , 
\end{eqnarray*}
which is valid for all $||z|| \le r$ and all $t \ge T$.
Again using Gronwall's lemma, we obtain
$|| Y^{\eps}_{s,t}(z)-Y^0_{s,t}(z)|| \le C_r \eps$ 
and then also $|| Y^{\eps}_{s,t}(z)|| \le ||
Y^{\eps}_{s,t}(z)-Y^0_{s,t}(z)||+||Y^0_{s,t}(z)||
\le C_r$ for all $||z|| \le r$ and
all $t \ge T$.
 This proves (\ref{eq:var13}) and finishes the proof of
Theorem \ref{thm:var2} for the case $t=\infty$.
\end{proof}

Theorem \ref{thm:var2} enables us to construct variations for a certain class 
of Loewner chains. We first recall the basic concepts.

\begin{definition}
A \textit{normalized Loewner chain} $(f_t)_{t \ge 0}$ is a family of univalent
functions $f_t : \B^n \to \C^n$ such that $f_t(0)=0$, $d(f_t)_0=e^t \id$ for
all $t \ge 0$ and such that for every $0 \le s \le t$ there exists a holomorphic map
$\varphi_{s,t} : \B^n \to \B^n$ with $f_s=f_t \circ \varphi_{s,t}$.
A normalized Loewner chain $(f_t)_{t \ge 0}$ is called a \textit{parametric
  representation} if the family $\{e^{-t} f_t\}$ is normal. 
\end{definition}

\begin{remark} \label{rem:1}
We note the following well--known facts, see \cite{GK}.
\begin{itemize}
\item[(a)]
If $(f_t)$ is a normalized Loewner chain, then there is a unique
Herglotz vector field $G(z,t)$ in the class $\M$ such that
the Loewner--Kufarev PDE
\begin{equation} \label{eq:l3}
\frac{\partial f_t}{\partial t}(z)=- d(f_t)_z   \cdot G(z,t)
\end{equation}
holds.
\item[(b)] If  $G(z,t)$ is a Herglotz vector field in the class $\M$, we define
$$f^G_s:=\lim \limits_{t \to \infty} e^t \varphi^G_{s,t}\, .$$
Then  $(f^G_t)_{t \ge 0}$ is a parametric
representation. In fact, $(f^G_t)_{t \ge 0}$ is the unique parametric
representation such that $f_t^G$ is a solution to
(\ref{eq:l3}) for the Herglotz vector field $G(z,t)$. 
The Loewner chain $(f^G_t)$ is called the \textit{canonical solution of the Loewner PDE (\ref{eq:l3})}.
\item[(c)] It follows from part (b) that the class $\mathcal{S}_n^0$ consists
  precisely of all normalized univalent functions $f \in \HB$ for which
there is  a parametric representation $(f_t)_{t \ge 0}$ with $f_0=f$. 
\end{itemize}
\end{remark}

We now construct for a given parametric representation $(f_t)_{t \ge 0}$
a differentiable family of deformations $(f^{\eps})_{t \ge 0}$ that coincide
with $(f_t)_{t \ge 0}$ from a certain time on.

\begin{theorem}[Variations of parametric representations] \label{thm:parvar}
Let $(f_t)_{t \ge 0}$ be a parametric representation  with associated Herglotz
vector field $G(z,t)$ in the class $\mathcal{M}_n$. Let
$(\varphi_{s,t})_{0 \le s \le t}$ denote the evolution family generated by $G(z,t)$.
Then for any $T \in R_G$, any $h \in \M$ and any $\eps \in
(0,T)$ there exists a parametric representation $(f^{\eps}_t)_{t \ge 0}$ such that
$$ f^{\eps}_t=\begin{cases}
f_t & \text{ if } t  \ge T\, , \\
f_t+\eps \, d(f_t)_z \cdot \left[ d(e^T\varphi_{t,T})_z \right]^{-1} \cdot e^T
\left[
  h(\varphi_{t,T})-G(\varphi_{t,T},T)\right]+o_t^{\eps} & \text{ if } t < T
\, .
\end{cases}$$
Here,  $o^{\eps}_{t}$ indicates a term, which divided by $\eps$, tends to $0$
 locally uniformly in $\B^n$ as $\eps\to 0+$. 
\end{theorem}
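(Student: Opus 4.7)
The plan is to deduce Theorem \ref{thm:parvar} directly from Theorem \ref{thm:var2} by reinterpreting the conclusion of that theorem in the language of Loewner chains. Given $T \in R_G$, $h \in \M$ and $\eps \in (0,T)$, I would first form the needle variation $G_{\eps} := G_{\eps,h,T}$ of $G$ with data $(T,h)$, and then \emph{define} the candidate family to be the canonical parametric representation associated with $G_{\eps}$ via Remark \ref{rem:1}(b), namely
\[ f_t^{\eps} := \lim_{\tau \to \infty} e^{\tau} \, \varphi^{G_{\eps}}_{t,\tau} \, . \]
By Remark \ref{rem:1}(b) the family $(f_t^{\eps})_{t \ge 0}$ is automatically a parametric representation, so the task reduces to checking the two branches of the claimed formula.

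For the easy branch $t \ge T$, I would observe that $G_{\eps}(\cdot,\tau)=G(\cdot,\tau)$ for every $\tau \ge T$; hence the Loewner ODE (\ref{eq:L2}) with initial time $s = t \ge T$ has \emph{the same} unique solution for both vector fields. Therefore $\varphi^{G_{\eps}}_{t,\tau} = \varphi_{t,\tau}$ for all $\tau \ge t$, and letting $\tau \to \infty$ yields $f_t^{\eps} = f_t$, as desired.

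For the non-trivial branch $t < T$, I would apply Theorem \ref{thm:var2} with the parameter $s$ there set equal to $t$ (note $t \in [0,T]$, as required) and read off the conclusion at the time $\tau = \infty$. This produces exactly
\[ f_t^{\eps} = f_t + \eps \, d(f_t)_z \cdot \left[ d(e^T \varphi_{t,T})_z \right]^{-1} \cdot e^T \bigl[ h(\varphi_{t,T}) - G(\varphi_{t,T},T) \bigr] + r^{\eps}_{t,\infty} \, , \]
with $r^{\eps}_{t,\infty}/\eps \to 0$ locally uniformly in $\B^n$ as $\eps \to 0+$; setting $o_t^{\eps} := r^{\eps}_{t,\infty}$ completes the formula.

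The main obstacle is already absorbed into Theorem \ref{thm:var2}, where the linearized error had to be controlled uniformly in $\tau \ge T$ in order to justify the limit $\tau = \infty$ (this was the content of the matrix-ODE estimates (\ref{eq:var13})). Once that theorem is available, the present statement is essentially a bookkeeping translation from evolution families to Loewner chains, and no further technical work is required beyond verifying that the $s=t$ case is covered by Theorem \ref{thm:var2}.
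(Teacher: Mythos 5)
Your proposal is correct and follows essentially the same route as the paper: define $f_t^{\eps}$ as the canonical limit $\lim_{\tau\to\infty} e^{\tau}\varphi^{G_{\eps}}_{t,\tau}$ of the needle-varied evolution family, note that $\varphi^{\eps}_{t,\tau}=\varphi_{t,\tau}$ for $T\le t\le\tau$ to get the branch $t\ge T$, and invoke Theorem \ref{thm:var2} with $s=t$ at $\tau=\infty$ for the branch $t<T$. The only (harmless) cosmetic difference is that you cite Remark \ref{rem:1}(b) explicitly to certify that $(f_t^{\eps})$ is a parametric representation, which the paper leaves implicit.
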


\begin{proof}[Proof]
Let $G(z,t)$ denote the Herglotz vector field in the class $\M$
such that the Loewner PDE (\ref{eq:l3}) holds, so $f_t=\lim_{\tau \to
  \infty} e^{\tau} \varphi_{t,\tau}$ for any $t \ge 0$ in view of Remark
\ref{rem:1} (b).
Denote by $\varphi^{\eps}_{s,t}$ the needle variations of $\varphi_{s,t}$ with
data $(T,h)$. Define $f^{\eps}_t=\lim_{\tau \to
  \infty} e^{\tau} \varphi^{\eps}_{t,\tau}$ for any $t \ge 0$.
Since $\varphi^{\eps}_{t,\tau}=\varphi_{t,\tau}$ for $T \le t \le \tau$, we
have $f^{\eps}_t=f_t$ for any $t \ge T$. Now let $t<T$ and choose $\tau \ge
T$. Then Theorem \ref{thm:var2} shows that
$$ e^{\tau} \varphi^{\eps}_{t,\tau}=e^{\tau} \varphi_{t,\tau}+\eps \, d(e^{\tau}\varphi_{t,\tau})_z \cdot
\left[ d(e^{T} \varphi_{t,T})_z \right]^{-1} \cdot e^T \big[
h(\varphi_{t,T})-G(\varphi_{t,T},T) \big]+o^{\eps}_{t,\tau} \, .$$
Here, $o^{\eps}_{t,\tau}/\eps \to 0$ as $\eps\to 0+$ locally uniformly in
$\B^n$. The proof is finished by letting $\tau=\infty$.
\end{proof}

As we have already pointed out in the introduction, Theorem \ref{thm:parvar} is related to the recent work \cite{BHKG}, where
variations of a specific class of Loewner chains (so--called ger\"aumig
Loewner chains) have been introduced, see in particular Theorem 3.1 in~\cite{BHKG}.

\section{Extremal problems on $\mathcal{S}_n^0$} \label{sec:4}

In order to apply our variational formulas, we need to consider a suitable class
of ``differentiable nonlinear functionals'' on the Fr\'echet space $\HB$. We
use the Fr\'echet space calculus as developed by R.~Hamilton
\cite{Ham82}. This approach is more general than the one used in the standard
monographs \cite{Pom} or \cite{Dur83}.

\begin{definition}[Complex derivative, \cite{Ham82}, p.~73] \label{def:der}
Let $U \subseteq \HB$ be an open set and $\Phi : U \to \C$ continuous.
We call $\Phi : U \to \C$ \textit{differentiable at $f \in U$ along $h \in \HB$}, if
the limit
$$ \Lambda(f;h):=\lim \limits_{\C \ni \delta \to 0} \frac{\Phi(f+\delta
  h)-\Phi(f)}{\delta}$$
exists. In this case, $\Lambda(f;h)$ is called the \textit{directional derivative of $\Phi$
at $f$ along $h$}.
We say that $\Phi : U \to \C$ is \textit{complex differentiable at $F \in U$}, if 
there is an open neighborhood $V \subseteq U$ of $F$ such that $\Phi$ is differentiable at
any $f \in V$ along any $h \in \HB$ and if the map $\Lambda : V \times \HB \to \C$
is continuous. In this case, $L:=\Lambda(F,\cdot)$ is called the
\textit{complex derivative of $\Phi$ at $F$}. 
\end{definition}
 
\begin{lemma} \label{lem:ham}
Let $U \subseteq \HB$ be an open set and let  $\Phi : U \to \C$ be complex
differentiable at $F \in U$ with complex derivative $L=\Lambda(F; \cdot)$. Then
\begin{itemize}
\item[(a)] The continuous functional $L : \HB \to \C$ is linear.
\item[(b)] If  $h \in \HB$ and $f^{\eps}=F+\eps h+r_{\eps}$, where  $r_{\eps}/\eps \to 0$
 locally uniformly in $\B^n$ as $\eps \to0+$, then
$$ \lim \limits_{\eps \to 0+}  \frac{\Phi(f^{\eps})-\Phi(F)}{\eps}=L(h)\, .$$
\end{itemize}
\end{lemma}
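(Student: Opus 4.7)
The plan is to deduce both assertions from Hamilton's Fr\'echet--space fundamental theorem of calculus combined with the joint continuity of the directional derivative map $\Lambda : V \times \HB \to \C$ built into Definition \ref{def:der}. The key ingredient is that for any line segment $[f_0, f_1]$ contained in $V$, the map $t \mapsto \Phi(f_0 + t(f_1 - f_0))$ on $[0,1]$ is, by the very definition of $\Lambda$, differentiable with continuous derivative $t \mapsto \Lambda(f_0 + t(f_1-f_0), f_1 - f_0)$, so the ordinary $\C$--valued fundamental theorem of calculus yields
$$ \Phi(f_1) - \Phi(f_0) = \int_0^1 \Lambda\bigl(f_0 + t(f_1 - f_0),\, f_1 - f_0\bigr)\, dt. $$

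For part (a), complex homogeneity $L(\lambda h) = \lambda L(h)$ is immediate from the substitution $\mu = \lambda \delta$ in the defining limit (trivially for $\lambda = 0$). For additivity, I would fix $h_1, h_2 \in \HB$ and pick $\delta > 0$ so small that the segment from $F + \delta h_2$ to $F + \delta(h_1 + h_2)$ lies in $V$. Applying the formula above and dividing by $\delta$ gives
$$ \frac{\Phi(F + \delta(h_1 + h_2)) - \Phi(F + \delta h_2)}{\delta} = \int_0^1 \Lambda\bigl(F + \delta h_2 + s\delta h_1,\, h_1\bigr)\, ds, $$
whose integrand converges uniformly in $s$ to $\Lambda(F, h_1) = L(h_1)$ as $\delta \to 0$ by joint continuity of $\Lambda$. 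Together with $[\Phi(F + \delta h_2) - \Phi(F)]/\delta \to L(h_2)$ from the definition of $L$, this produces $L(h_1 + h_2) = L(h_1) + L(h_2)$. Continuity of $L$ is immediate from the joint continuity of $\Lambda$ restricted to $\{F\} \times \HB$.

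For part (b), since $f^{\eps} = F + \eps h + r_{\eps} \to F$ locally uniformly as $\eps \to 0+$, the segment $[F, f^{\eps}]$ lies in $V$ for all sufficiently small $\eps > 0$. The additivity argument of part (a) applies verbatim at every point of $V$, so $\Lambda(f, \cdot)$ is $\C$--linear for each $f \in V$. Applying the FTC on $[F, f^{\eps}]$ and factoring out the scalar $\eps$ from the second slot of $\Lambda$,
$$ \frac{\Phi(f^{\eps}) - \Phi(F)}{\eps} = \int_0^1 \Lambda\bigl(F + t(\eps h + r_{\eps}),\, h + r_{\eps}/\eps\bigr)\, dt. $$
As $\eps \to 0+$ the base point tends to $F$ locally uniformly and $h + r_{\eps}/\eps \to h$ locally uniformly by hypothesis, so joint continuity of $\Lambda$ forces the integrand to converge uniformly in $t \in [0,1]$ to $\Lambda(F, h) = L(h)$, yielding (b).

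The only genuine obstacle is topological bookkeeping: verifying that the various line segments stay inside $V$ (using openness of $V$ together with the hypotheses that $f^{\eps} \to F$ and $r_{\eps}/\eps \to 0$ locally uniformly in $\B^n$), and observing that additivity is available at every point of $V$, not just at $F$, so that the scalar $\eps$ may legitimately be pulled out of the second argument of $\Lambda$.
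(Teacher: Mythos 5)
Your proof is correct and is in substance the same as the paper's: the segment formula $\Phi(f_1)-\Phi(f_0)=\int_0^1 \Lambda\bigl(f_0+t(f_1-f_0),\,f_1-f_0\bigr)\,dt$ that you establish is exactly the content of Hamilton's Lemma 3.3.1 (the continuous map $\hat{L}(f_1,f_2,h)$, linear in $h$, with $\hat{L}(f,f,h)=\Lambda(f,h)$), which the paper invokes as a black box together with Hamilton's linearity result for part (a), and then passes to the limit in $\hat{L}(F,f^{\eps},\eps h+r_{\eps})/\eps$ by continuity and linearity in the last slot, just as you do with the integral. The only difference is that you reprove Hamilton's ingredients (homogeneity by the scalar substitution, additivity via the segment integral, and the joint-continuity limit, with the needed care that homogeneity is available at every point of $V$ and that the segments stay in $V$), so your write-up is self-contained rather than a different method.
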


\begin{proof} (a) See \cite[p.~76--77]{Ham82}.\\
(b) Let $U \subseteq \HB$ be an open neighborhood of $F$ such that $$\Lambda : U
\times \HB \to \C$$ is continuous. We may assume that $U$ is convex. Lemma
3.3.1 in \cite{Ham82} shows that there is a continuous mapping $\hat{L} : U \times U
\times \HB \to \C$ so that $h \mapsto \hat{L}(f_1,f_2,h)$ is linear and such that
$\Phi(f_2)-\Phi(f_1)=\hat{L}(f_1,f_2,f_2-f_1)$ for all $f_1,f_2 \in U$. In addition,
$\hat{L}(f_1,f_1,h)=\Lambda(f_1,h)$ for every $f_1 \in U$ and every $h \in \HB$.
Therefore,
$$
\frac{\Phi(f^{\eps})-\Phi(F)}{\eps}=\frac{\hat{L}(F,f^{\eps},f^{\eps}-F)}{\eps}=
\frac{\hat{L}(f^{\eps},F,\eps h+r_{\eps})}{\eps} \to \hat{L}(F,F,h)=L(h)$$
as $\eps\to0+$.
\end{proof}

\begin{corollary} \label{cor:diff}
Let $G(z,t)$ be a Herglotz vector field in the class $\M$, let $T \in R_G$ and $h \in \M$.
Consider the needle variations
$(\varphi^{\eps}_{t})$ of $(\varphi_{t}):=(\varphi^G_{t})$ with data $(T,h)$.
Suppose that $\Phi$ is a complex functional with complex derivative $L$ at
$e^{\tau}\varphi_{\tau}$ for some fixed $\tau \in (T,\infty]$.
Then
$$ \lim \limits_{\eps \to 0+} \frac{\Phi(e^{\tau} \varphi^{\eps}_{\tau})-\Phi(e^{\tau}
  \varphi_{\tau})}{\eps}=
L\left(  d(e^{\tau}\varphi_{\tau})_z \cdot \left[
  d(e^T\varphi_{T})_z \right]^{-1} \cdot e^T \big[ h(\varphi_{T})-G(\varphi_{T},T)
\big] \right) \, . $$
\end{corollary}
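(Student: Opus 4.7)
The proof is a direct combination of Theorem \ref{thm:var2} and Lemma \ref{lem:ham}(b). The plan is simply to identify the variation $e^\tau \varphi^\eps_\tau$ with the abstract form $F + \eps h + r_\eps$ appearing in Lemma \ref{lem:ham}(b), and then invoke that lemma.

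First I would apply Theorem \ref{thm:var2} with $s = 0$, writing $\varphi^\eps_\tau := \varphi^\eps_{0,\tau}$ and $\varphi_\tau := \varphi_{0,\tau}$. For the fixed regular time $T \in R_G$, the direction $h \in \M$, and the given $\tau \in (T, \infty]$, the theorem produces the decomposition
$$ e^\tau \varphi^\eps_\tau \;=\; e^\tau \varphi_\tau \;+\; \eps \, H_\tau \;+\; r^\eps_{0,\tau}, $$
where
$$ H_\tau \;:=\; d(e^\tau \varphi_\tau)_z \cdot \big[ d(e^T \varphi_T)_z \big]^{-1} \cdot e^T \big[ h(\varphi_T) - G(\varphi_T, T) \big] \;\in\; \HB, $$
and where the error term $r^\eps_{0,\tau}$ lies in $\HB$ and satisfies $r^\eps_{0,\tau}/\eps \to 0$ locally uniformly on $\B^n$ as $\eps \to 0+$. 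The case $\tau = \infty$ is covered by the same theorem together with the remark that immediately follows it, which defines $r^\eps_{0,\infty}$ as a locally uniform limit and ensures that the convergence property persists.

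With this decomposition in hand, I would then apply Lemma \ref{lem:ham}(b) with the substitutions $F := e^\tau \varphi_\tau$, direction equal to $H_\tau$, error $r_\eps := r^\eps_{0,\tau}$, and $f^\eps := e^\tau \varphi^\eps_\tau$. By hypothesis, $\Phi$ is complex differentiable at $F$ with complex derivative $L$, so the assumptions of the lemma are met, and it yields
$$ \lim_{\eps \to 0+} \frac{\Phi(e^\tau \varphi^\eps_\tau) - \Phi(e^\tau \varphi_\tau)}{\eps} \;=\; L(H_\tau), $$
which is exactly the identity asserted in the corollary.

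Since both ingredients are already established, I do not expect any genuine obstacle. The only subtlety worth verifying is that for $\tau = \infty$ the element $H_\infty$ really lies in $\HB$ and the error term still satisfies $r^\eps_{0,\infty}/\eps \to 0$ locally uniformly in $\B^n$; both of these points are addressed explicitly in Theorem \ref{thm:var2} and the remark following it, so the argument goes through uniformly in $\tau \in (T,\infty]$.
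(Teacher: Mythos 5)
Your proposal is correct and matches the paper's own proof, which likewise cites Theorem \ref{thm:var2} with $s=0$ together with Lemma \ref{lem:ham}~(b); you simply spell out the substitution $F:=e^{\tau}\varphi_{\tau}$, $f^{\eps}:=e^{\tau}\varphi^{\eps}_{\tau}$ and the handling of $\tau=\infty$ in more detail. No issues.
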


\begin{proof}
This follows from Theorem \ref{thm:var2} for $s=0$ and Lemma \ref{lem:ham} (b).
\end{proof}

\begin{theorem} \label{thm:main2}
Let $\Phi$ be a complex functional with complex derivative $L$ at $F \in
\mathcal{S}_n^0$ and suppose that $F$ maximizes $\Re \Phi$ over $\mathcal{S}_n^0$.
Let $G(z,t)$ be a Herglotz vector field in the class $\M$ with $(\varphi_t):=(\varphi^G_t)$
 such that $F=e^{\tau} \varphi_{\tau}$ for some $\tau \in (0,\infty]$.
Then the fowllowing hold.
 \begin{enumerate}
\item[(a)] For every $t \in (0,\tau) \cap R_G$,
 the function $G(\cdot,t)\in \mathcal{M}_n$ maximizes the real part of the continuous linear
  functional $$L_t(h):=L\left(d(F)_z \cdot   \left[
  d(\varphi_t)_z\right]^{-1} \cdot h(\varphi_t) \right)$$ over $\mathcal{M}_n$,
that is,
 $$ \max \limits_{h \in \mathcal{M}_n} \Re L_t(h) = \Re L_t(G(\cdot,t)) \, .$$
\hide{
$$ \max \limits_{h \in \mathcal{M}_n} \Re L\left((dF)_z\cdot  \left[ d(e^t
  \varphi_t)_z \right]^{-1} \cdot h(\varphi_t) \right)=\Re L \left((dF)_z
\left( d(e^t
  \varphi_t)_z \right]^{-1} \cdot G(\varphi_t,t) \right)\, . $$}
\item[(b)] The function $t \mapsto \max \limits_{h \in \mathcal{M}_n} \Re L_t(h)$
 is constant on $[0,\tau)$.
\end{enumerate}
\end{theorem}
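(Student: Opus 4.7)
My plan is to derive both parts of Theorem~\ref{thm:main2} from the needle-variation formula of Theorem~\ref{thm:var2} (applied with $s=0$), together with Corollary~\ref{cor:diff} and the extremality of $F$.

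\textbf{Part (a).} Fix $t \in R_G \cap (0,\tau)$ and $h \in \mathcal{M}_n$. I apply Theorem~\ref{thm:var2} with $s=0$ and $T=t$: the family $(e^\tau \varphi^\eps_{0,\tau})_{\eps>0} \subset \mathcal{S}_n^0$ expands as
$$e^\tau \varphi^\eps_{0,\tau} = F + \eps\, d(F)_z \cdot [d(\varphi_t)_z]^{-1}\bigl[h(\varphi_t) - G(\varphi_t,t)\bigr] + r^\eps,$$
after simplifying $[d(e^t\varphi_t)_z]^{-1}\cdot e^t = [d(\varphi_t)_z]^{-1}$, with $r^\eps/\eps \to 0$ locally uniformly. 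Corollary~\ref{cor:diff} then identifies
$$\lim_{\eps \to 0+}\frac{\Phi(e^\tau \varphi^\eps_{0,\tau}) - \Phi(F)}{\eps} = L_t(h) - L_t(G(\cdot,t)).$$
Since $F$ maximizes $\Re\Phi$ over $\mathcal{S}_n^0$, the real part of this limit is nonpositive, yielding $\Re L_t(h) \le \Re L_t(G(\cdot,t))$. The case $\tau=\infty$ is handled by the $t=\infty$ version of Theorem~\ref{thm:var2}.

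\textbf{Part (b).} Write $u(t) := \max_{h \in \mathcal{M}_n}\Re L_t(h)$. Continuity of $u$ on $[0,\tau)$ follows from (i) continuity of $t \mapsto L_t(h)$ for fixed $h$ (both $\varphi_t$ and $d(\varphi_t)_z$ are continuous in $t$ in the compact-open topology and $L$ is continuous on $\HB$) and (ii) compactness of $\mathcal{M}_n$, giving upper- and lower-semicontinuity of the maximum. By part~(a), $u(t) = \Re L_t(G(\cdot,t))$ on the dense set $R_G \cap (0,\tau)$. For constancy I would use a \emph{time-shift reduction}: for each $s \in [0,\tau)$, the rescaled flow $\tilde F(s) := e^{\tau-s}\varphi^G_{s,\tau}$ (with $\tilde F(s) := e^{-s}f^G_s$ when $\tau=\infty$) lies in $\mathcal{S}_n^0$ and satisfies $F = e^s \tilde F(s) \circ \varphi_s$. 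Since $g \mapsto e^s g \circ \varphi_s$ sends $\mathcal{S}_n^0$ into itself (by concatenating Herglotz vector fields), $\tilde F(s)$ is extremal for $\Psi_s(g) := \Phi(e^s g \circ \varphi_s)$, whose complex derivative at $\tilde F(s)$ is $L^{(s)}(k) := e^s L(k \circ \varphi_s)$. A chain-rule computation using $d(F)_z = e^s\, d(\tilde F(s))_{\varphi_s(z)}\cdot d(\varphi_s)_z$ yields $L^{(s)}(d(\tilde F(s))\cdot h) = L_s(h)$ and $L^{(s)}(\tilde F(s)) = L(F)$, so Theorem~\ref{thm:pommerenke} applied to the extremal $\tilde F(s)$ gives $u(s) = -\Re L(F)$, independent of $s$.

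The \textbf{main obstacle} is that this argument invokes Theorem~\ref{thm:pommerenke}; to avoid circularity, one must prove Theorem~\ref{thm:pommerenke} without using part~(b). I would do so through the ``prefix'' variation $F^\eps := e^\eps F \circ \psi^h_\eps$, where $\psi^h_\eps$ is the autonomous flow of $h \in \mathcal{M}_n$: the Herglotz vector field equal to $h$ on $[0,\eps]$ and to $G(\cdot,\,t-\eps)$ thereafter realizes $F^\eps$ as an element of $\mathcal{S}_n^0$, and its expansion $F^\eps = F + \eps[F + d(F)\cdot h] + o(\eps)$ combined with extremality of $F$ yields $\Re L(d(F)\cdot h) \le -\Re L(F)$ for every $h \in \mathcal{M}_n$. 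The reverse inequality follows from the Loewner-Kufarev variation $F_s := e^{-s}f^G_s \in \mathcal{S}_n^0$ at a regular $0 \in R_G$, which gives $\Re L(d(F)\cdot G(\cdot,0)) \ge -\Re L(F)$ and hence $u(0) \ge -\Re L(F)$. With Theorem~\ref{thm:pommerenke} thereby established independently, the time-shift argument completes the proof of part~(b).
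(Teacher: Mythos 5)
Your part (a) is exactly the paper's argument: needle variation at a regular time $T=t$, Theorem \ref{thm:var2}/Corollary \ref{cor:diff}, and extremality of $F$ give $\Re L_t(h-G(\cdot,t))\le 0$; nothing to add there. Your part (b), however, takes a genuinely different route. The paper proves constancy of $m(t)=\max_{h\in\M}\Re L_t(h)$ intrinsically: it shows $m$ is locally Lipschitz (using the representation of $L$ by compactly supported measures plus Gronwall estimates), shows the ``Hamiltonian'' is conserved along the flow, i.e.\ $\tfrac{\partial}{\partial t}\Re H(t,G_t)=0$ a.e.\ thanks to the cancellation $-[d\varphi_{t}]^{-1}\partial_zG\cdot G+[d\varphi_{t}]^{-1}\partial_zG\cdot G=0$, and then runs an envelope-theorem sandwich using $m(t)=\Re H(t,G_t)$ from part (a). Theorem \ref{thm:pommerenke} is then a corollary of (b). You invert this order: prove a Pommerenke-type identity first by initial-time variations (the prefix variation $e^{\eps}F\circ\psi^h_{\eps}$, which is correct and does not need a Lebesgue point since the inserted field is autonomous), and then transport it along the flow by the renormalization $\tilde F(s)=e^{\tau-s}\varphi_{s,\tau}$, $\Psi_s(g)=\Phi(e^sg\circ\varphi_s)$. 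The algebra of that reduction ($T_s(\mathcal{S}_n^0)\subseteq\mathcal{S}_n^0$, $L^{(s)}(d(\tilde F(s))\cdot h)=L_s(h)$, $L^{(s)}(\tilde F(s))=L(F)$) checks out, and you correctly flag that quoting Theorem \ref{thm:pommerenke} as stated would be circular.

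The genuine gap is in the reverse inequality of your stand-alone proof of Theorem \ref{thm:pommerenke}: you differentiate the Loewner--Kufarev chain $e^{-s}f^G_s$ at $s=0$ ``at a regular $0\in R_G$''. Regular points form a full-measure subset of $(0,\infty)$, and nothing guarantees that the initial time is one (the defining Lebesgue-point condition is not even formulated at $t=0$); so the right derivative of $s\mapsto f^G_s$ at $s=0$ with value $-d(F)_z\cdot G(z,0)$ need not exist, and the inequality $\Re L(d(F)\cdot G(\cdot,0))\ge -\Re L(F)$ is unjustified as written. This is repairable within your scheme, e.g.\ by noting that $\M$ is convex and compact so the averages $\tfrac1s\int_0^s G(\cdot,u)\,du$ have a subsequential limit $h_0\in\M$ which serves as the maximizing direction, or more simply by running your shifted argument only at regular $s$ (where the shifted field is regular at its initial time), obtaining $u(s)=-\Re L(F)$ for a.e.\ $s$, and concluding by continuity of $u$ -- but then the continuity of $u$ carries real weight, and your justification of it is too thin: separate continuity of $t\mapsto \Re L_t(h)$ plus compactness of $\M$ gives lower but not upper semicontinuity of the maximum; you need joint continuity of $(t,h)\mapsto\Re L_t(h)$ on $[0,\tau)\times\M$ (which does hold here, and which the paper secures in stronger, locally Lipschitz form in step (b1)). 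With these two points patched, your alternative proof of (b) is viable and arguably more conceptual, at the price of redoing Theorem \ref{thm:pommerenke} before rather than after Theorem \ref{thm:main2}.
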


Theorem \ref{thm:main2a} is the special case $\tau=\infty$ of Theorem
\ref{thm:main2} (a).
 Theorem \ref{thm:main2} (a) for $\tau<\infty$ and $n=1$ is exactly
  Theorem 4.1 in \cite{Ro}.

\begin{proof}[Proof of Theorem \ref{thm:main2}]
(a) Let $h \in \M$, $T \in (0,\tau)\cap R_G$ and let $(\varphi_t^{\eps})$ denote the needle variations of $(\varphi_t)$ 
with data $(T,h)$. Since $F$ maximizes $\Re \Phi$ on $\mathcal{S}_n^0$, 
we have $\Re \Phi(F) \ge \Re \Phi(e^{\tau} \varphi^{\eps}_{\tau})$
for every $\eps>0$. Corollary \ref{cor:diff} therefore implies
$$\Re L\left(  d(F)_z \cdot \left[
  d(e^T\varphi_{T})_z \right]^{-1} \cdot \big[ h(\varphi_{T})-G(\varphi_{T},T)
\big] \right) \le 0 \, .$$

\medskip

(b) Let $H(t,h):=L_t(h)=L \left(d(F)_z  \cdot \left[d
  (\varphi_t)_z\right]^{-1} \cdot h(\varphi_t) \right)$ and
$$ m(t):=\max \limits_{h \in \M} \Re H(t,h) \, .$$
In order to show that $m$ is constant on $[0,\tau)$ we proceed in several
steps.

\smallskip

(b1) We first show that $m : [0,\tau) \to \R$ is locally Lipschitz continuous. Since $L$ is a continuous
linear functional on the Fr\'echet space $\HB$, there are finite complex
Borel measures $\mu_1,\ldots, \mu_n$ which are supported on  compact subsets
$E_1,\ldots, E_n$ of $\B^n$
such that
\begin{equation} \label{eq:fd0}
 L(h)=\sum \limits_{k=1}^n \iint \limits_{E_k} h_k(z) \, d\mu_k(z) \, , \qquad
 h=(h_1,\ldots, h_n) \in \HB \, ,
\end{equation}
see e.g.~\cite[p.~65]{GKP}.
Let $E$ be a closed ball in $\B^n$ centered at
the origin such that $E_k \subset E$ for $k=1,\ldots, n$.
Since $||\varphi_t(z)|| \le ||z||$ for every $z \in \B^n$ and every $t \ge 0$, it
follows that
\begin{equation} \label{eq:fd1}
\varphi_t(z) \in E\text{ for all } z \in E \text{ and all } t \ge 0 \, .
\end{equation}
As $\M$ is a compact subset of $\HB$, we see as before that there exists a constant $\gamma>0$ such that
\begin{equation} \label{eq:fd2}
|| h(z)|| \le \gamma ||z|| \, \text{ and } \, ||h(z)-h(z')|| \le \gamma ||z-z'||\quad \text{ for all
} z,z' \in E , \,  h \in \M \, ,
\end{equation}
see e.g.~formula (8.1.2) in \cite{GK}.
Since $\varphi_t(z)$ is a solution to (\ref{eq:L}) and $G(\cdot,t) \in  \M$
for a.e.~$t \ge 0$, the estimate
(\ref{eq:fd2}) combined with (\ref{eq:fd1}) implies
\begin{equation} \label{eq:fd3}
|| \varphi_{\beta}(z)-\varphi_{\alpha}(z) ||= \left|\left| \int
    \limits_{\alpha}^{\beta} G(\varphi_t(z),t) \, dt \right|\right| \le \gamma
\cdot |\beta-\alpha|
\quad \text{ for all } \alpha,\beta \ge 0 \, , z \in E \, .
\end{equation}

In a similar way, since $t \mapsto \left[ d(\varphi_t)_z \right]^{-1}$ has the
property that
$$ \frac{d}{dt} \left(  \left[ d\left(\varphi_t\right)_z \right]^{-1} \right)=-  \left[
  d\left(\varphi_t\right)_z \right]^{-1} \cdot \frac{\partial G}{\partial
  z}(\varphi_t(z),t) \quad \text{ for a.e. } t \ge 0 \, , $$
an application of Gronwall's lemma leads to
\begin{equation} \label{eq:fd5}
 \left|\left| \left[ d \left(\varphi_{\beta} \right)_z \right]^{-1} \right|
\right| \le e^{\gamma \beta} 
\quad \text{ for
  all } \beta \ge 0 \, , z \in E \, ,
\end{equation}

and

\begin{equation} \label{eq:fd4}
\left|\left|  \left[ d\left(\varphi_{\beta}\right)_z \right]^{-1}- \left[ d\left(\varphi_{\alpha}\right)_z
    \right]^{-1} \right|\right| \le \left|e^{\gamma \beta}-e^{\gamma \alpha}\right| 
\text{ for all } \alpha,\beta \ge 0 \, , z \in E \, .
\end{equation}

We can now prove that $m : [0,\tau) \to \R$ is locally  Lipschitz continuous.
Let $ \alpha,\beta \ge 0$ be given. Since $\M$ is a compact subset of
$\HB$, there is a function $h_{\beta} \in \M$ such that $m(\beta)=\Re H(\beta,h_{\beta})$.
In view of $m(\alpha) \ge \Re H(\alpha,h_{\beta})$ it follows that
\begin{eqnarray*}
m(\beta)-m(\alpha) & \le & \Re H(\beta,h_{\beta})-\Re H(\alpha,h_{\beta})\\ &&
\hspace*{-2cm} = 
\Re L \left( d\left(F\right)_z \cdot \left[  d\left(\varphi_{\beta} \right)_z
  \right]^{-1} \cdot h_{\beta}(\varphi_{\beta}) \right) 
- \Re L \left( d\left(F\right)_z \cdot \left[  d\left(\varphi_{\alpha} \right)_z
  \right]^{-1} \cdot h_{\beta}(\varphi_{\alpha}) \right) \\
&& \hspace*{-2cm} = \Re L \left( d\left(F\right)_z \cdot \left[ d\left(\varphi_{\beta} \right)_z
  \right]^{-1} \cdot \left(
    h_{\beta}(\varphi_{\beta})-h_{\beta}(\varphi_{\alpha}) \right) \right)\\
& & +  \Re L \left( d\left(F\right)_z \cdot \left( \left[  d\left(\varphi_{\beta} \right)_z
  \right]^{-1}-\left[ d\left(\varphi_{\alpha} \right)_z \right]^{-1} \right)
\cdot h_{\beta}(\varphi_{\alpha}) \right) \\
& & \hspace*{-2cm} = \Re \sum \limits_{k=1}^n \iint \limits_{E_k} \left(
  d(F)_z \cdot \left[ d\left(\varphi_{\beta}
  \right)_z \right]^{-1} \cdot  \left(
    h_{\beta}(\varphi_{\beta}(z))-h_{\beta}(\varphi_{\alpha}(z)) \right) \right)_k \,
  d\mu_k(z)\\
& & \hspace*{-1cm} + \Re \sum \limits_{k=1}^n \iint \limits_{E} \left( d\left(F\right)_z \cdot \left( \left[ d\left(\varphi_{\beta} \right)_z
  \right]^{-1}-\left[ d\left(\varphi_{\alpha} \right)_z \right]^{-1} \right)
\cdot h_{\beta}(\varphi_{\alpha}(z)) \right)_k \, d\mu_k(z)
\, ,
\end{eqnarray*}
where we have used the representation formula (\ref{eq:fd0}).
In view of the estimates
(\ref{eq:fd1})--(\ref{eq:fd4}), we now see that for every compact subintervall $I$
of $[0,\tau)$ 
there is a constant $C=C_I$ 
 such that
$m(\beta)-m(\alpha) \le C |\beta-\alpha|$ for all $\alpha,\beta \in I$.  This shows that $m : [0,\tau) \to
\R$ is locally Lipschitz.

\smallskip

(b2) We next show that
$$ \frac{\partial H}{\partial t}(t,G_t)=0 \quad \text{ for a.e.} \quad t \ge 0
\, .$$
As above, using the fact that $\varphi_t$ is a solution of the Loewner equation (\ref{eq:L}),
 we first see that there is a set $E \subseteq \R^+$ of measure $0$ such that
for any $t \in \R^+\backslash E $
$$ \frac{d}{dt} \left( \left[ d(\varphi_t)_z \right]^{-1} \right)=- \left[ d(\varphi_t)_z
\right]^{-1} \cdot \frac{\partial G}{\partial z}(\varphi_t(z),t) \quad \text{
  locally uniformly w.r.t.~} z \in \B^n\, .$$
Now, for any $t,t^* \in \R^+$, we have
\begin{eqnarray*}
& &  \hspace*{-0.7cm}\frac{ \left[ d(\varphi_t)_z \right]^{-1}
  G(\varphi_t(z),t^*)- \left[ d(\varphi_{t^*})_z \right]^{-1}
  G(\varphi_{t^*}(z),t^*)}{t-t^*}= \\ && = \frac{ \left[ d(\varphi_t)_z \right]^{-1}-\left[ d(\varphi_{t^*})_z
  \right]^{-1}}{t-t^*} G(\varphi_t(z),t^*) +\left[ d(\varphi_{t^*})_z
\right]^{-1} \frac{G(\varphi_t(z),t^*)-G(\varphi_{t^*}(z),t^*)}{t-t^*} \, ,
\end{eqnarray*}
and this expression converges for $t \to t^*
\in \R^+\backslash E$ to
$$  -\left[ d(\varphi_{t^*})_z \right]^{-1} \cdot
\frac{\partial G}{\partial z}(\varphi_{t^*}(z),t^*) G(\varphi_{t^*}(z),t^*)+\left[ d(\varphi_{t^*})_z
\right]^{-1} \frac{\partial G}{\partial z}(\varphi_{t^*}(z),t^*) G(\varphi_{t^*}(z),t^*)
=0 $$
locally uniformly w.r.t.~$z \in \B^n$. Hence, by  definition of $H$, 
\begin{eqnarray*}
 \lim \limits_{t \to t^*}
 \frac{H(t,G(\cdot,t^*))-H(t^*,G(\cdot,t^*))}{t-t^*}=0 \, .
\end{eqnarray*}
for every $t^* \in \R^+\backslash E$.

\smallskip

(b3)  Next note that $m(t)=\Re H(t,G_t)$ for every $t \in (0,\tau) \cap R_G$ by part (a).
Therefore, we obtain for all $t,t^* \in (0,\tau) \cap R_G$ such that $t^*<t$, 
$$ \frac{\Re H(t,G_{t^*})-\Re H(t^*,G_{t^*})}{t-t^*} \le
\frac{m(t)-m(t^*)}{t-t^*} \le \frac{\Re H(t,G_{t})-\Re H(t^*,G_{t})}{t-t^*}
\, .$$
Since $m : [0,\tau) \to \R$ is locally Lipschitz continuous, it is differentiable for
a.e.~$t \ge 0$, so 
$$ \frac{d}{dt} m(t)=\frac{\partial \Re H}{\partial t}(t,G_t) \qquad \text{
  for a.e. }
t \in (0,\tau) \, .$$
By what we have proved in  (b2), we see that 
$$ \frac{d}{dt} m(t)=0 \qquad \text{
  for a.e. }
t \in (0,\tau) \, .$$
Therefore, the locally Lipschitz continuous function
 $m : [0,\tau) \to \R$ is constant on $[0,\tau)$.
\end{proof}

\begin{proof}[Proof of Theorem \ref{thm:pommerenke}]
Since $\M$ is compact and $h(0)=0$, $dh_0=-\id$ for every $h \in \M$,
there exists for every $0<r<1$ a constant $M_r>0$ such that
\begin{equation} \label{eq:p1}
||h(z)+z|| \le M_r ||z||^2 \quad \text{ for all } ||z|| \le r \text{ and every
} h \in \M \, .
\end{equation}
By formula (8.1.11) in \cite{GK} this implies
$$ ||h(\varphi_t(z))+\varphi_t(z)|| \le M_r ||\varphi_t(z)||^2 \le M_r e^{-2
  t} \frac{||z||^2}{(1-||z||)^4}$$
for all $||z|| \le r$ and every $h \in \M$. Therefore,
$$ e^t h(\varphi_t(z))=-e^t \varphi_t(z)+e^t \left(
  h(\varphi_t(z))+\varphi_t(z) \right) \to -F(z) \qquad ( t \to \infty) $$
locally uniformly for $z \in \B^n$ and uniformly for $h \in \M$. Since we also
have $\left[ d(e^t\varphi_t) \right]^{-1} \to \left[ d(F)_z \right]^{-1}$ locally
uniformly in $\B^n$ as $t \to \infty$, we get
$$ L_t(h)=L\left( d(F)_z \cdot \left[ d(\varphi_t)_z \right]^{-1}
  h(\varphi_t)\right)=L \left( d(F)_z \cdot \left[ d(e^t\varphi_t)_z \right]^{-1}
  e^th(\varphi_t)\right) \to -L(F) $$
uniformly for $h \in \M$, so 
$$m(t)=\max \limits_{h \in \M} \Re L_t(h) \to -\Re L(F) \qquad (t \to \infty)
\, .$$
On the other hand, 
$$ m(0)=\max \limits_{h \in \M} \Re L_0(h)=\max \limits_{h \in \M} \Re
L\left(d(F)_z \cdot h \right) \, .$$
Therefore, Theorem \ref{thm:main2} (b) completes  the proof of Theorem \ref{thm:pommerenke}.
\end{proof}

We next show that under the condition that $L$ is not constant on
$\mathcal{S}_n^0$, the continuous linear functionals $L_t$ in Theorem
\ref{thm:main2} are support points of $\M$. First we recall the following definition.

\begin{definition} \label{def:supp}
Let $\mathcal{A} \subseteq \HB$.
A function $G \in \mathcal{A}$ is called a \textit{support point} of $\mathcal{A}$, if there exists a
continuous linear functional $L : \HB \to \C$ such that $\Re L(h) \le \Re L(G)$
for every $h \in \mathcal{A}$ and $L$ is not constant on $\mathcal{A}$.
We denote by $\supp \mathcal{A}$ the set of all support points of $\mathcal{A}$.
\end{definition}

\begin{proposition} \label{prop:lin}
Let $\Phi$ be a complex functional with complex derivative $L$ at $F \in
\mathcal{S}_n^0$ and suppose that $F$ maximizes $\Re \Phi$ over
$\mathcal{S}_n^0$.
Let $G$ be a Herglotz vector field in the class $\M$ with $(\varphi_t):=(\varphi^G_t)$
 such that $F=e^{\tau} \varphi_{\tau}$ for some $\tau \in (0,\infty]$.
Suppose that $L$ is not constant on $\mathcal{S}_n^0$.
Then for any $t \in [0,\tau]$ the continuous linear functional
$$ h \mapsto L_t(h):=L\left(d(F)_z \cdot   \left[d(e^t
  \varphi_t)_z\right]^{-1} \cdot h(\varphi_t) \right)$$
is not constant on $\M$. 
\end{proposition}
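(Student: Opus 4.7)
The plan is to argue by contrapositive: fix $t \in [0, \tau]$ and suppose $L_t$ is constant on $\M$; I will deduce that $L$ is constant on $\mathcal{S}_n^0$, contradicting the hypothesis.

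First I reduce to a subspace of $\HB$. Since $L_t$ is a continuous linear functional on the Fr\'echet space $\HB$ and $\M$ is convex, constancy of $L_t|_\M$ is equivalent to $L_t$ vanishing on $\operatorname{span}(\M - \M)$. Note $-\id \in \M$, and for any polynomial $p \in \HB$ with $p(0) = 0$ and $dp_0 = 0$, the elementary bound $\|p(z)\| \le C_p\|z\|^2$ on $\B^n$ gives $-\id + \delta p \in \M$ for all sufficiently small $\delta > 0$, so $p \in \operatorname{span}(\M - \M)$. These polynomials are dense (via truncation of the Taylor expansion) in the closed subspace
$$W := \{g \in \HB : g(0) = 0,\ dg_0 = 0\},$$
so continuity of $L_t$ forces $L_t|_W \equiv 0$.

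Next, pick any $f \in \mathcal{S}_n^0$; the goal is to show $L(f) = L(F)$. Define a holomorphic map $\tilde g : \varphi_t(\B^n) \to \C^n$ by
$$\tilde g(w) := d(\varphi_t)_{\varphi_t^{-1}(w)} \cdot \bigl[d(F)_{\varphi_t^{-1}(w)}\bigr]^{-1} \cdot (f - F)\bigl(\varphi_t^{-1}(w)\bigr),$$
which is well-defined since $\varphi_t$ is biholomorphic onto $\varphi_t(\B^n)$ and $d(F)_z$ is invertible on $\B^n$. Using $f(0) = F(0) = 0$, $df_0 = dF_0 = \id$, $\varphi_t(0) = 0$, and a short chain-rule computation, one checks $\tilde g(0) = 0$ and $d\tilde g_0 = 0$; and by construction,
$$f(z) - F(z) = d(F)_z \cdot [d(\varphi_t)_z]^{-1} \cdot \tilde g\bigl(\varphi_t(z)\bigr) \quad \text{for all } z \in \B^n.$$
By the Arosio--Bracci--Wold theorem \cite{ABFW}, $\varphi_t(\B^n)$ is Runge in $\C^n$, so there exist entire maps $g_k : \C^n \to \C^n$ with $g_k \to \tilde g$ locally uniformly on $\varphi_t(\B^n)$. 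Replacing $g_k$ by $g_k - g_k(0) - d(g_k)_0 \cdot z$ (which still converges to $\tilde g$ on $\varphi_t(\B^n)$, since $0 \in \varphi_t(\B^n)$), I may assume $g_k|_{\B^n} \in W$.

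Because $\varphi_t$ sends compacta of $\B^n$ into compacta of $\varphi_t(\B^n)$, the maps $d(F)_z [d(\varphi_t)_z]^{-1} g_k(\varphi_t(z))$ converge to $f(z) - F(z)$ locally uniformly on $\B^n$. Applying $L$ and noting that $L$ of the left-hand side equals $e^t L_t(g_k) = 0$ (by the first step, since $g_k \in W$), I conclude $L(f - F) = 0$; as $f \in \mathcal{S}_n^0$ was arbitrary, $L$ is constant on $\mathcal{S}_n^0$, the desired contradiction. The main obstacle is the Runge approximation: without the Arosio--Bracci--Wold result that $\varphi_t(\B^n)$ is Runge in $\C^n$, there is no a priori reason a map defined only on $\varphi_t(\B^n)$ should be approximable by elements of $\HB$, which is precisely what transfers the constancy from $\M$ up to $\mathcal{S}_n^0$. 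A minor additional subtlety is the bookkeeping of the normalization $g(0) = 0$, $dg_0 = 0$ through both the density argument in the first step and the Runge approximation.
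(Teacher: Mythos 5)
Your proof is correct and follows essentially the same route as the paper: perturbations $-\id+\delta p$ with polynomials $p$ vanishing to second order show that $L_t$ annihilates such maps, and the Arosio--Bracci--Wold Runge property of $\varphi_t(\B^n)$ then transfers this to constancy of $L$ on $\mathcal{S}_n^0$. The only differences are organizational (you establish $L_t|_W\equiv 0$ by density first and apply Runge to the explicitly conjugated difference $f-F$, whereas the paper applies Runge to an arbitrary second-order-vanishing $g$ and then takes $g=f-\id$), which does not change the substance of the argument.
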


\begin{proof} We  show that if $L_t$ is constant on $\M$ for some $t \in
  [0,\tau]$, then $L$ is constant on $\mathcal{S}^0_n$. Hence let $t \in
  [0,\tau]$ such  that $L_t(h)$ is constant on $\mathcal{M}_n$. Let $P : \C^n \to \C^n$
be a polynomial mapping with $P(0)=0$ and $d(P)_0=0$. Then there is a number $\delta>0$
such that $-z+\eps P(z) \in \M$ for every $\eps \in \C$ with $|\eps|<\delta$,
so $L_t(-z+\eps P)=- L_t(z)+\eps L_t(P)$ is constant in $\eps$.
This implies that $L_t(P)=0$. Now let $g \in \HB$ with $g(0)=0$ and
$d(g)_0=0$.
Since $\varphi_t(\B)$ is Runge (see \cite{ABFW}), $g$ is the locally uniform
limit of $(P_k \circ \varphi_t)_k$ for a sequence of polynomials $P_k$ with
$P_k(0)=0$ and $d(P_k)_0=0$. Hence
$$ 0=\lim \limits_{k \to \infty} L_t(P_k)=
L \left(d(F)_z \cdot   \left[d(e^t
  \varphi_t)_z\right]^{-1}  \cdot g\right) \, $$
for all  $g \in \HB$ with $g(0)=0$ and
$d(g)_0=0$. This clearly implies $L(g)=0$ for all such $g$.
Since we can take $g=f-\id$ for any $f \in \mathcal{S}^0_n$, we get
  $L(f)=L(\id)$, $f \in \mathcal{S}^0_n$, so $L$  is constant on
$\mathcal{S}_n^0$.
\end{proof}

\begin{definition} 
A function $F \in \mathcal{S}_n^0$ is called \textit{extremal}, if there exists
a  complex functional $\Phi$ with complex derivative $L$ at $F$, such that
\begin{itemize}
\item[(a)] $\Re \Phi(f) \le \Re \Phi(F)$ for all $f \in \mathcal{S}_n^0$, and
\item[(b)] $L$ is not constant on $\mathcal{S}_n^0$.
\end{itemize}
\end{definition}

\begin{theorem} \label{thm:main3}
Let $G(z,t)$ be a Herglotz vector field in the class $\M$.
Let $e^{\tau} \varphi_{\tau}^G$ be extremal for some $\tau \in
(0,\infty]$. Then $G(\cdot,t) \in \supp \M$ for every $t \in R_G \cap (0,\tau]$.
\end{theorem}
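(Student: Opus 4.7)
The plan is to assemble the conclusion by directly combining the two ingredients already prepared: the maximization statement of Theorem \ref{thm:main2}(a) and the non-triviality statement of Proposition \ref{prop:lin}. Unpacking the definition of extremality, we obtain a complex functional $\Phi$ whose complex derivative $L$ at $F:=e^\tau\varphi_\tau^G$ satisfies (a) $\Re\Phi(f)\le\Re\Phi(F)$ for every $f\in\mathcal{S}_n^0$, and (b) $L$ is not constant on $\mathcal{S}_n^0$.

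First, for any $t\in(0,\tau)\cap R_G$, Theorem \ref{thm:main2}(a) applied with this $\Phi$ and $L$ yields
$$ \Re L_t(h)\le \Re L_t(G(\cdot,t))\quad\text{for all } h\in\M,$$
where
$$L_t(h)=L\bigl(d(F)_z\cdot[d(\varphi_t)_z]^{-1}\cdot h(\varphi_t)\bigr).$$
Next, since (b) holds, Proposition \ref{prop:lin} (which is valid on the closed interval $[0,\tau]$) tells us that $L_t$ is \emph{not} constant on $\M$ for every $t\in[0,\tau]$. Comparing these two facts against Definition \ref{def:supp} of support point shows that $G(\cdot,t)\in\supp\M$ for every $t\in R_G\cap(0,\tau)$, which settles the case $\tau=\infty$ at once since $\infty\notin R_G$.

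The only point that requires a separate look is the endpoint $t=\tau$ when $\tau<\infty$ and $\tau\in R_G$. Here the proof of Theorem \ref{thm:main2}(a) goes through with $T=\tau$ without essential change: the needle variation with data $(\tau,h)$ is concentrated on $(\tau-\eps,\tau)$, the evolution family $(\varphi_{s,t}^\eps)$ still lies in $\mathcal{S}_n^0$ at time $\tau$ (since $e^\tau\varphi_{0,\tau}^\eps\in\mathcal{S}_n^0$), and Theorem \ref{thm:var2} applied at $t=T=\tau$ collapses cleanly to
$$ e^\tau\varphi_{0,\tau}^\eps=e^\tau\varphi_\tau+\eps\,e^\tau\bigl[h(\varphi_\tau)-G(\varphi_\tau,\tau)\bigr]+r_\tau^\eps.$$
Invoking extremality of $F$ together with Lemma \ref{lem:ham}(b) then produces $\Re L_\tau(h)\le\Re L_\tau(G(\cdot,\tau))$ for all $h\in\M$, and Proposition \ref{prop:lin} once more supplies non-constancy of $L_\tau$ on $\M$. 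Hence $G(\cdot,\tau)\in\supp\M$ as well.

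The only step that requires any genuine care is verifying that the needle-variation argument of Theorem \ref{thm:main2}(a) remains admissible at the right endpoint $T=\tau$; once that is observed, the rest is purely a matter of citing Theorem \ref{thm:main2}(a), Proposition \ref{prop:lin} and the definition of support point. No further estimates or limits are needed.
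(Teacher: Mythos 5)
Your proof is correct and follows essentially the same route as the paper, whose proof consists of citing Theorem \ref{thm:main2}(a) together with Proposition \ref{prop:lin} and Definition \ref{def:supp}. Your separate treatment of the endpoint $t=\tau$ (when $\tau<\infty$ and $\tau\in R_G$), using Theorem \ref{thm:var2} at $t=T=\tau$ and Lemma \ref{lem:ham}(b) so that $d(F)_z\cdot\left[d(\varphi_\tau)_z\right]^{-1}=e^{\tau}\id$ collapses the variational formula, correctly supplies a detail the paper's one-line proof leaves implicit, since Theorem \ref{thm:main2}(a) is stated only for $t\in(0,\tau)\cap R_G$.
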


\begin{proof}
This follows immediately from Theorem \ref{thm:main2} (a) and Proposition \ref{prop:lin}.
\end{proof}

Theorem \ref{thm:main3a} now follows directly from Corollary \ref{cor:main}
for $\tau=\infty$ and Lemma \ref{lem:lebesgue}, which shows that the set 
$R_G$ of regular points of any Herglotz vector field $G(z,t)$ in the class
$\M$ has full measure.

\begin{corollary} \label{cor:main}
Let $G(z,t)$ be a Herglotz vector field in the class $\M$ and
assume that
$$ \sup \limits_{z \in \B^n\backslash \{0\}} \Re \langle G(z,T),z/||z||^2 \rangle<0$$
for some $T \in R_G$. Then $e^t \varphi_t \in \mathcal{S}_n^0$ is not extremal
for any $t \in (T,\infty]$. 
\end{corollary}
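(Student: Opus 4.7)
My plan is to argue by contradiction using Theorem \ref{thm:main3}. If $e^t\varphi_t$ were extremal for some $t \in (T,\infty]$, then since $T \in R_G \cap (0,t]$, Theorem \ref{thm:main3} (applied with $\tau = t$) would give $G(\cdot,T) \in \supp \M$ and hence, by Definition \ref{def:supp}, a continuous linear functional $L : \HB \to \C$ which is not constant on $\M$ yet maximizes $\Re L$ over $\M$ at the point $G(\cdot,T)$. I plan to contradict the non--constancy of $L$ on $\M$ by exploiting the strict gap in the hypothesis to produce many admissible perturbations of $G(\cdot,T)$ that stay inside $\M$.

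Set $c := -\sup_{z \in \B^n \setminus \{0\}} \Re \langle G(z,T), z/||z||^2\rangle > 0$, giving the uniform estimate $\Re \langle G(z,T), z\rangle \le -c\,||z||^2$ on $\B^n$. For any polynomial mapping $P : \C^n \to \C^n$ with $P(0) = 0$ and $d(P)_0 = 0$, each component vanishes to second order at the origin, so $||P(z)|| \le M_P\,||z||^2$ for some $M_P > 0$. A one--line Cauchy--Schwarz calculation then gives
\[
\Re \langle G(z,T) + \epsilon P(z), z \rangle \le ||z||^2 \bigl( -c + |\epsilon| M_P \bigr)
\]
for every $\epsilon \in \C$ and every $z \in \B^n$, so $G(\cdot,T) + \epsilon P \in \M$ whenever $|\epsilon| < c/M_P$ (the normalizations $P(0)=0$ and $d(P)_0=0$ preserve the other defining properties of $\M$). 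The extremal property of $G(\cdot,T)$ then forces $\Re(\epsilon L(P)) \le 0$ throughout this whole complex disk, which immediately yields $L(P) = 0$.

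To conclude, I extend the identity $L(P)=0$ from polynomials to every $h \in \HB$ with $h(0) = 0$ and $d(h)_0 = 0$: each such $h$ is the locally uniform limit of its Taylor partial sums $\sum_{k=2}^N h_k$, which are polynomials with exactly the required normalizations, so continuity of $L$ on $\HB$ gives $L(h) = 0$. Applying this to $h := h_1 - h_2$ for arbitrary $h_1, h_2 \in \M$ (a difference which always satisfies both normalizations) yields $L(h_1) = L(h_2)$, so $L$ is constant on $\M$, contradicting the choice of $L$. The one delicate point I expect is ensuring that the perturbation $G(\cdot,T) + \epsilon P$ stays in $\M$ for a full \emph{complex} disk of $\epsilon$, not merely for real $\epsilon$ of a single sign; this is what upgrades the one--sided bound $\Re L(P) \le 0$ to the full equation $L(P) = 0$ needed to close the Taylor approximation, and is precisely where the strict inequality in the hypothesis, as opposed to the non--strict inequality defining $\M$, enters essentially.
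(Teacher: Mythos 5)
Your proposal is correct and follows essentially the same route as the paper: reduce via Theorem \ref{thm:main3} to showing $G(\cdot,T)\notin\supp\M$, use the strict gap $\Re\langle G(z,T),z\rangle\le -c\,\|z\|^2$ to admit perturbations $G(\cdot,T)+\eps P$ in $\M$ for a full complex disk of $\eps$ (for polynomials $P$ with $P(0)=0$, $d(P)_0=0$), conclude $L(P)=0$, and extend by polynomial density to get $L$ constant on $\M$, a contradiction. The Taylor partial-sum approximation you use to pass from polynomials to general $g$ with $g(0)=0$, $d(g)_0=0$ is exactly the (implicit) density step in the paper's argument.
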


\begin{proof}
In view of Theorem \ref{thm:main3}, it suffices to show that $h:=G(\cdot,T)$ cannot be
a support point of the class $\M$ on $\B^n$. By assumption, there is a constant $a>0$
such that $\Re \langle G(z,T),z \rangle \le -a ||z||^2$ for all $z \in \B^n$.
This implies that for any polynomial mapping $P : \C^n \to \C^n$ with $P(0)=0$
and $d(P)_0=0$ there is a number $\delta>0$ such that $h+\eps P \in \M$ for every
$\eps \in \C$ with $|\eps|<\delta$. If $h \in \supp \M$, there is a
continuous linear functional $L$ on $\HB$ such that $\max_{g \in \M} \Re L(g)=\Re
L(h)$ and $L$ is not constant on $\M$. We can now argue as in the proof of
Proposition \ref{prop:lin}.
In particular, $\Re L(h+\eps P)
\le \Re L(h)$ for any $|\eps|<\delta$, so $L(P)=0$ for every polynomial mapping $P : \C^n \to \C^n$ with $P(0)=0$
and $d(P)_0=0$. Hence $L=0$ on the set of functions $g \in \HB$ with $g(0)=0$
and $d(g)_0=0$, so $L$ is constant on $\M$, a contradiction.
\end{proof}

We like to end with a remark that extends Lemma 4.3 in \cite{BHKG}.

\begin{remark}
Corollary \ref{cor:main} shows that  if $(f_t)_{t \ge 0}$ is a
parametric representation such that
$$ \inf \limits_{z \in \B^n \backslash \{ 0\}} \Re \bigg\langle \left[d(f_t)_z\right]^{-1} \frac{\partial f_t}{\partial
  t}(z),\frac{z}{||z||^2}\bigg\rangle >0$$
for all $t \in E$, where $E \subseteq \R^+$ is a set of positive measure, then
$f_0$ is not extremal (in particular, 
$f_0 \not \in \supp \mathcal{S}_n^0$). 
\end{remark}

\vfill

Oliver Roth\\
Department of Mathematics\\
University of W\"urzburg\\
Emil Fischer Stra{\ss}e 40\\
97074 W\"urzburg\\
Germany\\
roth@mathematik.uni-wuerzburg.de

\end{document}